\theoremstyle{plain}
\newtheorem{thm}[equation]{Theorem}
\newtheorem{lem}[equation]{Lemma}
\newtheorem{question}[equation]{Question}
\newtheorem{prop}[equation]{Proposition}
\newtheorem{example}[equation]{Example}
\newtheorem{THM}{Theorem}
\newtheorem{cor}[equation]{Corollary}
\newtheorem{rem}[equation]{Remark}
\newtheorem{defn}[equation]{Definition}
\numberwithin{equation}{section}
\newcommand{\Z}{\mathbb Z}
\newcommand{\C}{\mathbb C}
\DeclareMathOperator{\Lie}{Lie}
\newcommand{\fg}{\mathfrak{g}}
\newcommand{\fb}{\mathfrak{b}}
\newcommand{\fm}{\mathfrak{m}}
\newcommand{\ft}{\mathfrak{t}}
\newcommand{\fu}{\mathfrak{u}}
\newcommand{\fp}{\mathfrak{p}}
\newcommand{\B}{\mathcal{B}}
\newcommand{\cx}{\mathcal{X}}
\newcommand{\RST}{{\tt RST}}
\newcommand{\RSST}{{\tt RSST}}
\newcommand{\SST}{{\tt SST}}
\begin{document}

\title[Hessenberg varieties of parabolic type]{Hessenberg varieties of parabolic type}

\author{Martha Precup}
\address{Department of Mathematics and Statistics\\ Washington University in St. Louis \\ One Brookings Drive \\ St. Louis, Missouri  63130 \\ U.S.A. }
\email{martha.precup@wustl.edu}

\author{Julianna Tymoczko}
\address{Dept. of Mathematics, Smith College, Northampton, Massachusetts  01063}\email{jtymoczko@smith.edu}


%
%

%
\maketitle

 \begin{abstract} This paper studies the geometry and combinatorics of three interrelated varieties: Springer fibers, Steinberg varieties, and parabolic Hessenberg varieties.  We prove that each parabolic Hessenberg variety is the pullback of a Steinberg variety under the projection of the flag variety to an appropriate partial flag variety and we give three applications of this result.  The first application constructs an explicit paving of all Steinberg varieties in Lie type $A$ in terms of semistandard tableaux. As a result, we obtain an elementary proof of a theorem of Steinberg and Shimomura that the well-known Kostka numbers count the maximal-dimensional irreducible components of Steinberg varieties.   The second application proves an open conjecture for certain parabolic Hessenberg varieties in Lie type A by showing that their Betti numbers equal those of a specific union of Schubert varieties. The third application proves that the irreducible components of parabolic Hessenberg varieties are in bijection with the irreducible components of the Steinberg variety.  All three of these applications extend our geometric understanding of the three varieties at the heart of this paper, a full understanding of which is unknown even for Springer varieties, despite over forty years' worth of work.  
\end{abstract}

 \section{Introduction}
 
 In this paper, we study the geometric and combinatorial structure of three interrelated varieties, using properties of one variety to infer new information about the others.  We now introduce these varieties in Lie type $A$ though much of the paper treats arbitrary Lie type.  Two of these varieties are subvarieties of the flag variety $G/B$, which in type $A$ is the collection of nested complex vector spaces $V_1 \subseteq V_2 \subseteq \cdots \subseteq V_{n-1} \subseteq \mathbb{C}^n$ where each $V_i$ is $i$-dimensional.  The third is a subvariety of the partial flag variety $G/P$, which in type $A$ is a family that includes the Grassmannian $G(k,n)$ of $k$-dimensional subspaces of a fixed $\mathbb{C}^n$.  The three main objects we consider are the following.
 
 \begin{enumerate}
 \item {\bf Springer fibers:}  Defined by a nilpotent linear operator $X$, the Springer fiber $\B^X$ is the family of flags that are fixed by $X$ in the sense that $XV_i \subseteq V_i$ for all $i$.  Springer proved that  the cohomology of the Springer fibers carries an action of $S_n$ in what is often considered a first example of a geometric representation theory \cite{Sp, Sp2}. The geometry of Springer fibers is deeply connected to the combinatorics of permutations and $S_n$-representations.  However, little is known about Springer fibers for general $X$ except the Betti numbers \cite{Fr, T} and that they are are pure dimensional with components indexed by standard tableaux \cite{S}.  More is known about the components themselves for particular $X$, e.g. if $X^2=0$ \cite{FM}, the Jordan type of $X$ has two blocks \cite{Fr2, Fu, Wilbert, ILW}, or when the irreducible components of $B^X$ are smooth \cite{GZ}.

 \item {\bf Parabolic Hessenberg varieties:} Hessenberg varieties loosen the condition used to define Springer fibers.  Given a linear operator $X$ and a nondecreasing function $h: \{1,2,\ldots,n\} \rightarrow \{1,2,\ldots,n\}$ the Hessenberg variety $\B(X,h)$ consists of the flags that $X$ moves by no more than $h$, in the sense that $XV_i \subseteq V_{h(i)}$ for all $i$.  Motivated by Hessenberg matrices and algorithms for efficiently calculating eigenvalues in numerical analysis, Hessenberg varieties in the flag variety of $GL_n(\C)$ were first introduced by De Mari and Shayman \cite{DS1988} and later defined in all Lie types by De Mari, Procesi, and Shayman \cite{dMPS}. Independently, Peterson and Kostant used them to construct the quantum cohomology of the flag variety \cite{Ko2} (see also~\cite{Rietsch2003}).  When $X$ has $n$ distinct eigenvalues, the equivariant cohomology of the corresponding Hessenberg variety carries an $S_n$-action \cite{T2} that can be described by certain quasisymmetric functions (see the conjecture by Shareshian and Wachs \cite{SW} and recent proof from Brosnan and Chow \cite{BC} and independently Guay-Paquet \cite{G}).  As with Springer fibers, this endows the Betti numbers of Hessenberg varieties with combinatorial and representation-theoretic significance.  Many people have analyzed these Betti numbers and cohomology rings for special cases of $X$ and $h$ (see \cite{T, Precup2018, M, AHHM, AHMMS2016} for just a few examples), though as with Springer fibers, the general geometric structure of Hessenberg varieties remains mysterious.
 
 This paper considers the case when $h$ corresponds to a parabolic subalgebra, which occurs when the image of $h$ consists of precisely those $i$ that are fixed by $h$.  (If $i_1 < i_2$ are two consecutive fixed points of $h$ then $h(i_1+1)=h(i_1+2)=\cdots=h(i_2)=i_2$.  This means $h$ describes the column-heights of a block-upper-triangular collection of matrices, namely a parabolic subalgebra of the $n \times n$ matrices.)  
 
 \item {\bf Steinberg varieties:} Steinberg varieties loosen the condition used to define Springer fibers in a different way.  Given a linear operator $X$ and an integer $1 \leq k < n$ the Steinberg variety associated to $X$ and $k$ is the collection of $k$-planes $V_k$ with $XV_k \subseteq V_k$.  More generally, if $X$ is a linear operator and $J$ is the index set of any partial flag variety $G/P_J$ with elements $V_{i_1} \subseteq V_{i_2} \subseteq \cdots \subseteq \mathbb{C}^n$ then the Steinberg variety corresponding to $X$ and $J$ is the image $\pi_J(\B^X)$ under the standard projection $\pi_J: G/B \rightarrow G/P_J$ obtained by forgetting subspaces not indexed by $i \in J$.  (We denote Steinberg varieties thus throughout this paper.) Steinberg proved that the irreducible components of $\pi_J(\B^X)$ of maximal dimension are counted by the {\em Kostka numbers,} a well-known quantity in algebraic combinatorics \cite{Steinberg}.  Borho and MacPherson computed the cohomology of the Steinberg variety $\pi_J(\B^X)$, identifying it with the subspace of $W_J$-invariants of the Springer representation on $H^*(\B^X)$ where $W_J$ is generated by the simple reflections $s_i$ for $i\notin J$ \cite{Borho-MacPherson}.  More recently, Fresse proved all Steinberg varieties are paved by affines~\cite{Fresse}.  Little else is known about the geometry of Steinberg varieties.  
 \end{enumerate}

This paper analyzes the topological structure of parabolic Hessenberg varieties.  Our main result  proves that each parabolic Hessenberg variety is the pull-back of a Steinberg variety under the projection to a partial flag variety  (c.f.~Theorem~\ref{pullback} below.)

\begin{THM}\label{Thm0} Let $h:\{1,2,\ldots,n\} \rightarrow \{1,2,\ldots,n\}$ be a parabolic Hessenberg function with fixed points $J=\{i_1,i_2,\ldots,i_k\}$ and let $\pi_J: G/B \to G/P_J$ be the corresponding projection of the full flag variety to the partial flag variety obtained by forgetting subspaces $V_i$ with $i \not \in J$.  The parabolic Hessenberg variety $\B(X,h)$ is the pull-back of the Steinberg variety $\pi_J(\B^X)$ under $\pi_J$. 
\end{THM}
 
We use this theorem to give an explicit formula for the Poincar\'e polynomial of a parabolic Hessenberg variety for those $X$ that satisfy the assumptions of Theorem~\ref{thm.paving}. Theorem~\ref{Poincare polynomial} proves it is the product of the Poincar\'e polynomial of the Steinberg variety and Poincar\'e polynomial of a smaller flag variety.   As a corollary, we show that the Poincar{\'e} polynomial of a parabolic Hessenberg variety is the shifted sum of the Poincar\'{e} polynomial of the Steinberg variety, with shifts determined by  $h$. 

Moreover our results explicitly lay out the combinatorics of a paving for both Steinberg varieties and parabolic Hessenberg varieties when $X$ satisfies the assumptions of Theorem~\ref{thm.paving}.  This allows us to specify Betti numbers for Steinberg and parabolic Hessenberg varieties, and to recover Fresse's proof that pavings of Steinberg varieties exist by explicitly producing a paving for these $X$.

We give three main applications of these results.

First, we develop an explicit combinatorial description of the paving of Steinberg varieties in type $A$ in terms of certain semistandard tableaux.  We recover a theorem of Steinberg  \cite{Steinberg} and Shimomura  \cite{Shimomura1, Shimomura2} that computes the number of irreducible and maximal-dimensional components of a Steinberg variety in terms of the well-known Kostka numbers.  However, our proof is more streamlined, grounded in the combinatorics of semistandard (versus standard) tableaux.

Second, we show that the Betti numbers of parabolic Hessenberg varieties for three-row or two-column nilpotent operators are equal to the Betti numbers of a specific union of Schubert varieties.  Schubert varieties are the closures of cells in the best-known CW-decomposition of the flag variety; they induce a cohomology basis for the flag variety, and their combinatorics and geometry are deeply intwined (see, for example, the books \cite{BL, F}). Varieties whose Betti numbers are those of a union of Schubert varieties admit a particularly simple construction of equivariant cohomology, as proven by Harada and the second author \cite{HT} and applied to certain Hessenberg varieties \cite{HTSchubert}.  Conjecturally, this applies to all nilpotent Hessenberg varieties.  The conjecture was confirmed for Hessenberg varieties when $X$ has a single Jordan block by Mbirika \cite{M}, who computed the Betti numbers,  and Reiner, who recognized them as those of a Schubert variety called the Ding variety \cite{D, DMR}.  More recently, it was also proven for three-row or two-column Springer fibers by the authors of the current paper \cite{PT}. 

Third and last, we give a new analysis of the irreducible components of parabolic Hessenberg varieties in Section \ref{Conclusion}.   We prove that the irreducible components of parabolic Hessenberg varieties are in bijection with those of the corresponding Steinberg variety, and state some consequences in the type $A$ case.

This paper is structured as follows.  The second section covers background information and notation.  The third analyzes the structure of parabolic Hessenberg varieties.  All the results in Section \ref{parabolic}, including our main result, hold for Hessenberg varieties defined using any complex algebraic reductive group.   The rest of the paper contains applications of this result.  The fourth section specializes to the case $G=GL_n(\C)$ and describes a paving of Steinberg varieties obtained by intersecting with Schubert cells.  The fifth section then proves in type $A$ that the Betti numbers of parabolic Hessenberg varieties are equal to those of a specific union of Schubert varieties. An analogous result holds for Steinberg varieties, except that the union of Schubert varieties is taken in the partial flag variety (which makes a significant difference).  Finally, Section \ref{Conclusion} concludes by studying the irreducible components of parabolic Hessenberg varieties.

 {\bf Acknowledgements.}  The first author was partially supported by an AWM-NSF mentoring grant during this work.  The second author was partially supported  by National Science Foundation grants DMS-1248171 and DMS-1362855.

 
 \section{Preliminaries}\label{preliminaries}
 
This section establishes key definitions, as well as some results that restate past work in the form that is most useful in what follows.  We fix the following notation:
\begin{itemize}
 \item $G$ is a complex algebraic reductive group with Lie algebra $\fg$.
 \item  $B$ is a fixed Borel subgroup of $G$  with Lie algebra $\fb$.
 \item $\Phi$ is the root system of $\fg$.
 \item $U$ is the maximal unipotent subgroup of $B$  with Lie algebra $\fu$.
 \item  $T\subset B$ is a fixed maximal torus with Lie algebra $\ft$.
 \item $W=N_G(T)/T$ denotes the Weyl group.
 \item We fix a representative $w\in N_G(T)$ for each $w\in W$ and use the same letter for both.
 \item  $\Phi^+$, $\Phi^-$, and $\Delta$ are the positive, negative and simple roots associated to the previous data.
 \item Given $\gamma\in \Phi$ we write $\fg_{\gamma}$ for the root space in $\fg$ corresponding to $\gamma$ and fix a generating root vector $E_{\gamma}\in \fg_{\gamma}$.
 \item We denote by $s_{\gamma}$ the reflection in $W$ corresponding to $\gamma\in \Phi$ and write $s_{\alpha_i}=s_i$ when $\alpha_i\in \Delta$.
 \end{itemize}
 
In Section~\ref{parabolic} we specialize to the case when $G=GL_n(\C)$ is the group of $n\times n$ invertible matrices and $\fg=\mathfrak{gl}_n(\C)$ is the collection of $n\times n$ matrices.  This is also our main example throughout.  In this setting, $B$ is the subgroup of invertible upper-triangular matrices,  $T$ is the diagonal subgroup, and $W\cong S_n$ is the symmetric group on $n$ letters.  The positive roots in this case are 
 \[
 \Phi^+=\{ \alpha_i+ \alpha_{i+1} \cdots +\alpha_{j-1}\mid 1\leq i < j \leq n \}
 \] 
 where $\alpha_i=\epsilon_{i}-\epsilon_{i-1}$ and $\epsilon_i(X)=X_{ii}$ for all $X\in \mathfrak{gl}_n(\C)$.  Let $E_{ij}$ denote the elementary matrix with $1$ in the $(i,j)$-entry and $0$ in every other entry. The root vector corresponding to the root $\gamma= \alpha_i+\alpha_{i+1} \cdots + \alpha_{j-1}$  for each $1\leq i<j \leq n$ is $E_{\gamma}=E_{i j}$.  When working in the type $A$ setting we sometimes identify $(i,j)$ with the root $ \alpha_i+ \alpha_{i+1} \cdots +\alpha_{j-1}$.  
 
\begin{defn}
The \textbf{inversion set} of the Weyl group element $w$ is the set
\[
N(w) = \{ \gamma\in \Phi^+ \mid w(\gamma)\in \Phi^- \}
\]  
\end{defn}

This generalizes to arbitrary Lie type the classical definition of an inversion, where the pair $(i,j)$ is  an inversion of $w\in S_n$ if $i<j$ and $w(i)>w(j)$. If we identify $(i,j)$ with the root $\alpha_i+ \alpha_{i+1}+\cdots + \alpha_{j-1}\in \Phi^+$ then $(i,j)$ is an inversion of $w$ in the classical sense if and only if $\alpha_i + \alpha_{i+1}+\cdots +\alpha_{j-1}\in N(w)$.  Note that if $\ell(w)$ denotes the (Bruhat) length function on $W$ then $\ell(w)=|N(w)|$.

The projective variety $G/B$ is called the \textbf{flag variety}.  When $G=GL_n(\C)$ the flag variety can be identified with the set of full flags $V_\bullet = (V_1 \subseteq V_2 \subseteq \cdots \subseteq V_{n-1} \subseteq V)$ in a complex $n$-dimensional vector space $V$ as in the Introduction.  Hessenberg varieties are parametrized by two objects: a Hessenberg space $H \subseteq \fg$ and an element $X \in \fg$.

\begin{defn}  A linear subspace $H\subseteq \fg$ is a \textbf{Hessenberg space} if $\fb\subseteq H$ and $[\fb, H] \subseteq H$.
\end{defn}

The condition that $[\fb, H]\subseteq H$ implies that this subspace of $\mathfrak{g}$ can be written as 
\[
H = \ft \oplus \bigoplus_{\gamma \in \Phi_H} \fg_{\gamma}
 \]  
over an index set $\Phi_H\subseteq \Phi$ determined by (and determining) $H$.  Let $\Phi_H^- = \Phi_H\cap \Phi^-$ denote the negative roots in this index set. When $\fg=\mathfrak{gl}_n(\C)$, the set of indices $\Phi_H$ forms a ``staircase" shape, in the sense that if $(i,j)$ corresponds to a root in $\Phi_H$ then so do all $(k,j)$ with $1 \leq k \leq i$ and all $(i,k)$ with $j \leq k \leq n$.  In other words if matrices in $H$ are not identically zero in the entry $(i,j)$, then they can be nonzero in any entry above or to the right of $(i,j)$. 

Each Hessenberg space $H\subseteq \mathfrak{gl}_n (\C)$ is uniquely associated to a Hessenberg function $h: \{ 1, ..., n \} \to \{ 1,..., n \}$ by the rule that $h(i)$ equals the number of entries that are not identically zero in the $i$-th column of $H$.  This is precisely the map $h$ from the Introduction.  The condition that $h(i)\geq i$ is equivalent to the requirement that $\fb \subseteq H$ while the condition $h(i) \geq h(i-1)$ is equivalent to the requirement $[\fb,H] \subseteq H$.  

We remark that the condition $\fb\subseteq H$ is typically, but not logically, necessary.  It is in any case implied when $H$ is a parabolic subalgebra, which is  the main focus of this paper.

\begin{example} We give a Hessenberg function $h$ and the corresponding Hessenberg space $H$ when $n=5$.  The space of matrices $H$ is described by indicating where the zeroes must be in each matrix; the entries designated $*$ can be filled freely with any element of $\mathbb{C}$.
\[H = \begin{pmatrix} * & * & * & * & * \\ * & * & * & * & * \\ 0 & 0 & * & * & * \\ 0 & 0 & * & * & * \\ 0 & 0 & 0 & * & * \end{pmatrix} \hspace{0.5in}  \longleftrightarrow \hspace{0.5in} h(i) = \left\{ \begin{array}{ll} 2 & \textup{ if } i=1,2 \\
4 & \textup{ if } i=3 \\ 5 & \textup{ if } i = 4,5 \end{array} \right. \]
\end{example}

This paper focuses on a family of subvarieties of the flag variety called Hessenberg varieties.   

\begin{defn} Fix a Hessenberg space $H\subset \fg$ and an element $X\in \fg$.  The \textbf{Hessenberg variety} associated to $X$ and $H$ is the subvariety of the flag variety given by
\[
\B(X,H)=\{ gB\in G/B \mid g^{-1}\cdot X \in H  \}
\] 
where $g\cdot X := Ad(g)X=gXg^{-1}$.
\end{defn}

In this paper, we assume $X\in \fg$ is nilpotent, in which case we say that the corresponding variety $\B(X,H)$ is a \textbf{nilpotent Hessenberg variety}.  A key example is the case in which $H=\fb$ and $X\in \fg$ is nilpotent.  Then $\B(X,\fb)$ consists of all flags $gB$ such that $g^{-1}\cdot X\in \fb$ or equivalently $X\in g\cdot \fb$.  This is called the \textbf{Springer fiber} and is denoted by $\B^X$.

Hessenberg varieties have an affine paving, which is like a CW-complex structure but with less restrictive closure conditions.

\begin{defn}  A \textbf{paving} of an algebraic variety $Y$ is a filtration by closed subvarieties
\[
Y_0\subset Y_1 \subset \cdots \subset Y_i \subset \cdots \subset Y_d=Y.
\]	
A paving is \textbf{affine} if every $Y_i-Y_{i-1}$ is a finite disjoint union of affine spaces.  In this case, we say that these affine spaces \textbf{pave} $Y$.
\end{defn}

Like CW-complexes, affine pavings can be used to compute the Betti numbers of a variety.

\begin{rem}\label{betti} Let $Y$ be an algebraic variety with an affine paving and let $n_k$ denote the number of affine components of dimension $k$, or zero if $n_k$ is zero.  Then the compactly-supported cohomology groups of $Y$ are given by $H_c^{2k}(Y)= \Z^{n_k}$.  \textup{(}For more, see e.g.~\cite[19.1.1]{F2}.\textup{)}
\end{rem}

The Bruhat decomposition of the flag variety induces a well-known paving by affines  \cite[Section 2.6]{BL}. Decompose the flag variety as $G/B=\bigsqcup_{w\in W} C_w$ where $C_w=BwB/B$ is the \textbf{Schubert cell} indexed by $w\in W$ and the closure $\overline{C}_w$ is a \textbf{Schubert variety}.  The paving of $G/B$ given by 
\[
(G/B)_i=\bigsqcup_{\ell(w)=i} \overline{C}_w
\]  
is affine because $\overline{C}_w=\bigsqcup_{y\leq w} C_y$ where $\leq$ denotes the Bruhat order and because $C_w\cong \C^{\ell(w)}$ for each $w$.  

Calculating the Poincar\'e polynomial of a Schubert variety or a union of Schubert varieties is a  application of this combinatorial description.  
\begin{example}\label{Schubertex}  Let $G=GL_4(\C)$ and consider $w=s_3s_2s_1s_3$.  The set $\{v\in W  \mid v\leq w\}$ is the set of all possible subwords of $w$.  When $w=s_3s_2s_1s_3$ this set is
\[
\{ s_3s_2s_1s_3, s_2s_1s_3, s_3s_2s_3, s_3s_2s_1, s_3s_2, s_3s_1, s_2s_1, s_2s_3, s_1, s_2, s_3, e\}
\]
Therefore the Poincar\'e polynomial of $\overline{C}_w$ is $P(\overline{C}_w, t)= 1+3t+4t^2+3t^3+t^4$.
\end{example}

Intersecting the Hessenberg variety $\B(X,H)$ with certain choices of Schubert cells gives an affine paving of $\B(X,H)$.  We call these intersections \textbf{Hessenberg Schubert cells} (or \textbf{Springer Schubert cells} if the underlying Hessenberg variety is in fact a Springer fiber).  We now describe the Hessenberg Schubert cells that we use in this paper.  Note that $\B(X,H)$ and $\B(g\cdot X, H)$ are homemorphic (see, for example, the one-line proof in \cite[Proposition 2.7]{T}).

Let $X\in \fg$ be nilpotent and fix $H$.   The previous paragraph says that we can choose $X$ within its conjugacy class to make computations as convenient as possible. We now describe  one such choice when $\fg=\mathfrak{gl}_n(\C)$. This particular operator will play an important role in the combinatorial results of Sections~\ref{section: Betti numbers of Steinberg} and~\ref{Schubert points}.  Recall that the conjugacy classes of nilpotent matrices in $\mathfrak{gl}_n(\C)$ are determined  by the sizes of their Jordan blocks.  Let $\lambda$ be a partition of $n$.    We first construct a representative for the nilpotent conjugacy class of Jordan type $\lambda$ as in~\cite[\S 4]{T}.   

\begin{defn}\label{defn: highest form} Let $\lambda=(\lambda_1, \lambda_2, \ldots, \lambda_k)$ be a partition of $n$, drawn as a Young diagram with $\lambda_i$ boxes in the $i$-th row from the top.  Fill the boxes of $\lambda$ with integers $1$ to $n$ starting at the bottom of the leftmost column and moving up the column by increments of one.  Then move to the lowest box of the next column and so on.    This is called the \textbf{base filling} of $\lambda$. Let $X$ be the matrix such that $X_{kj}=1$ if $j$ fills a box directly to the right of $k$ in the base filling and $X_{kj}=0$ otherwise.  
\end{defn}

These matrices will play a key role in the combinatorial results of subsequent sections.

\begin{example}\label{example: highest form} Let $n=5$ and $\lambda = (3,2)$.   Definition \ref{defn: highest form} gives the following base filling of $\lambda$ and nilpotent representative $X$ of Jordan type $\lambda$,
\[
\young(245,13) \quad \mbox{ and }\quad X= \begin{pmatrix} 0 & 0 & 1 & 0 & 0\\ 0 & 0 & 0 & 1 & 0\\ 0 & 0 & 0 & 0 & 0 \\						0 & 0 & 0 & 0 & 1\\ 0 & 0 & 0 & 0 & 0  \end{pmatrix}.
\]
\end{example}

Now we consider the case in which $\fg$ is an arbitrary complex reductive Lie algebra.  In this general setting, it is still possible to choose a representative for a nilpotent $X$ within its conjugacy class so that $X$ is a sum of positive root vectors; moreover, if $X$ is regular in some Levi subalgebra of $\fg$ then it is possible to make this choice so that the Hessenberg Schubert cells form a paving.  The details of this construction are not necessary for our arguments so we refer the interested reader to~\cite[Section 4]{P}.  

Our proofs require the existence of a Hessenberg Schubert paving, which is guaranteed by the following theorem (that combines results of the two authors~\cite{P, T}).

\begin{thm}\label{thm.paving}  Fix a Hessenberg space $H\subseteq \fg$.  Let $X\in \fg$ be a nilpotent element such that $X$ is regular in some Levi subalgebra of $\fg$ and:
\begin{enumerate}
\item if $\fg$ is type A and $X$ has Jordan type $\lambda$, then $X$ is the matrix constructed from the base filling of $\lambda$ as in Definition~\ref{defn: highest form}, or
\item  if $\fg$ is a complex reductive Lie algebra of arbitrary Lie type, then choose $X$ within its conjugacy class as in Section 4 of~\cite{P} \textup{(}c.f.~Corollary 4.9 of~\cite{P}\textup{)}.
\end{enumerate}
Let $X= \sum_{\gamma\in \Phi_X} E_{\gamma}$ for a subset $\Phi_X$ of positive roots. Then the intersection $C_w\cap \B(X,H)$ is nonempty if and only if $wB\in \B(X,H)$ or equivalently $w^{-1}\Phi_X \subseteq \Phi_H$.  If $C_w\cap \B(X,H)$ is nonempty then $C_w\cap \B(X,H)\cong \C^{d_w}$ for some nonnegative integer~$d_w$.  In particular the nonempty Hessenberg Schubert cells pave $\B(X,H)$.
\end{thm}

\begin{rem} If $X\in \mathfrak{gl}_n(\C)$ then $X$ can be conjugated into Jordan form, and Jordan form is regular in the Levi of block-diagonal matrices given by the Jordan blocks.  Results of the first author~\cite{P} and second author in~\cite{T} both prove that a Hessenberg Schubert paving exists in this case.  However, these pavings are obtained by different methods: more precisely, the  representative $X\in \mathfrak{gl}_n(\C)$ used by the first author is not always equal to the matrix from Definition~\ref{defn: highest form}.  We use the latter in this paper, as the matrices associated to the base filling of a Young diagram play a key role in the combinatorial results of subsequent sections.
\end{rem}

\section{Parabolic Hessenberg varieties are pullbacks of Steinberg varieties}\label{parabolic}

In this section we specialize to the case where the Hessenberg space $H$ is a parabolic subalgebra.  After some preliminary discussion, we prove the geometric relationship between parabolic Hessenberg varieties and Steinberg varieties in Theorem~\ref{pullback}. We then use this result to give an explicit formula for the Poincar\'e polynomial of a parabolic Hessenberg variety whenever the Hessenberg Schubert cells form a paving of that variety.

When $G=GL_n(\C)$, a standard parabolic subalgebra consists of all matrices with a particular block upper triangular form.  More generally, a \textbf{parabolic subalgebra} is any Lie subalgebra of $\fg$ containing a Borel subalgebra and similarly for parabolic subgroups.  A classical result states that the subgroups of $G$ containing $B$ are precisely the parabolic subgroups of the form  
\[
P_J = BW_JB = \bigsqcup_{w\in W_J} BwB
\]  
where $J \subseteq \Delta$ is a subset of simple roots and $W_J$ is the subgroup of $W$ generated by $\{ s_{i} \mid \alpha_i \in J \}$ \cite[Theorem 29.3]{H}.  Let $\fp_J=\Lie(P_J)$ denote the corresponding parabolic subalgebra.  Every parabolic subalgebra of this form is a Hessenberg space containing $\fb$.

Denote the projection from the full flag variety $\B=G/B$ to the partial flag variety $G/P_J$ by $\pi_J: G/B \to G/P_J$.  The variety 
\[
\pi_J (\B^X) = \{gP \mid g^{-1}\cdot X\in \fp_J\} \subseteq G/P_J
\]
is called the \textbf{Steinberg variety}.  Steinberg first studied these varieties~\cite{Steinberg}, followed by Shimomura~\cite{Shimomura1, Shimomura2}, and more recently Fresse~\cite{Fresse}. We will recover some of Fresse's results below using a more explicit method that permits us to identify Betti numbers, among other things.

{\bf For the rest of the paper we assume $H=\fp_J$ for some $J\subseteq \Delta$.}  We call the corresponding Hessenberg variety a \textbf{parabolic Hessenberg variety}.  

\subsection{Background on parabolics} We begin with a summary of notation and key structural aspects of parabolics.

Let $\Phi_J \subseteq \Phi$ be the subsystem of roots spanned by $J$ and denote its positive roots by $\Phi_J^+$ and negative roots by $\Phi_J^-$.  The subalgebra $\fp_J$ has Levi decomposition
\[
\fp_J = \fm_J \oplus \fu_J \mbox{ where } \fm_J=  \ft\oplus \bigoplus_{\gamma\in \Phi_J}\fg_{\gamma} 
\mbox{ and } 									
\fu_{J}=\bigoplus_{\gamma\in \Phi^+-\Phi_J^+} \fg_{\gamma}.
\]
There is a corresponding decomposition of $P$ into the semidirect product $M_JU_J$ where $M_J$ and $U_J$ are subgroups of $G$ with  $\Lie(M_J)=\fm_J$ and $\Lie(U_J)=\fu_J$.  Let $M_J/B_J:= M_J/(B\cap M_J)$ denote the flag variety of the Levi subgroup $M_J$.  
 
Each coset in $W/W_J$ contains a unique minimal-length representative.  Denote the set of minimal-length representatives by $W^J$.  This coset decomposition respects lengths; when $w\in W$ is written as $w=vy$ with $v\in W^J$ and $y\in W_J$ then $\ell(w)=\ell(v)+\ell(y)$ \cite[Proposition 2.4.4]{BB}.  The set $W^J$ can be characterized in the following different ways \cite[Remark 5.13]{Ko}.

\begin{rem}\label{fact: shortest coset representatives}   Fix a Weyl group element $v$.  The following statements are equivalent:
\begin{enumerate}
\item The Weyl group element $v$ is in $W^J$.
\item Every positive root $\gamma$ with $v^{-1}(\gamma)\in \Phi^-$ in fact satisfies $v^{-1}(\gamma)\in \Phi^- - \Phi_J^-$. 
\item For all $\alpha_i \in J$, we have $\alpha_i \notin N(v)$.
\end{enumerate}
\end{rem}

The decomposition $W=W^JW_J$ makes the task of identifying inversion sets particularly simple.  This is the context in which we usually use the following lemma, which is also a well-known result \cite[Equation (5.13.2)]{Ko}. 

\begin{lem}\label{fact: inversion sets}  Suppose that $v$ and $y$ are reduced words in $W$ whose product  $w=vy$ is also a reduced word.  Then $\ell(w)=\ell(v)+\ell(y)$ and the inversion set of $w$ is the disjoint union $N(w)=N(y) \sqcup y^{-1}N(v)$.
\end{lem}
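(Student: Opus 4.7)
The plan is as follows. The equality $\ell(w) = \ell(v) + \ell(y)$ is essentially definitional: concatenating reduced expressions for $v$ and $y$ produces a word of length $\ell(v)+\ell(y)$, and since by hypothesis this word is itself reduced, it realizes $\ell(w)$. The substantive content is therefore the inversion-set identity $N(w) = N(y) \sqcup y^{-1} N(v)$.

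First I would split $N(w)=\{\gamma\in\Phi^+: vy(\gamma)\in\Phi^-\}$ into two cases according to the sign of $y(\gamma)$. If $y(\gamma)\in\Phi^-$ then $\gamma\in N(y)$, and the condition $vy(\gamma)\in\Phi^-$ becomes $-y(\gamma)\notin N(v)$ (since $v$ sends a negative root $\delta$ to a negative root iff $-\delta\notin N(v)$). If $y(\gamma)\in\Phi^+$ then $\gamma\notin N(y)$, and the condition becomes $y(\gamma)\in N(v)$, equivalently $\gamma\in y^{-1}N(v)$. So $N(w)$ is the disjoint union of $\{\gamma\in N(y): -y(\gamma)\notin N(v)\}$ and $(\Phi^+ - N(y)) \cap y^{-1}N(v)$.

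The key combinatorial input that collapses these to the cleaner description in the lemma is the identity $N(v)\cap N(y^{-1}) = \emptyset$, which is equivalent to length-additivity via the standard formula $\ell(vy)=\ell(v)+\ell(y)-2|N(v)\cap N(y^{-1})|$. Granted this, for each $\beta\in N(v)$ disjointness forces $\beta\notin N(y^{-1})$, hence $y^{-1}(\beta)\in\Phi^+$; so $y^{-1}N(v)\subseteq\Phi^+$, and $y^{-1}N(v)\cap N(y) = \emptyset$ is automatic (if $\gamma\in N(y)\cap y^{-1}N(v)$ then $y(\gamma)\in N(v)\cap\Phi^-\subseteq\Phi^+\cap\Phi^-$, absurd). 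Thus the second case contributes exactly $y^{-1}N(v)$. Dually, for $\gamma\in N(y)$ one has $-y(\gamma)\in\Phi^+$ and $y^{-1}(-y(\gamma)) = -\gamma\in\Phi^-$, so $-y(\gamma)\in N(y^{-1})$; disjointness then yields $-y(\gamma)\notin N(v)$ automatically, and the first case contributes exactly $N(y)$. The disjointness of the union was just verified above.

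The main obstacle is really just recalling (or re-deriving) the length-additivity criterion $N(v)\cap N(y^{-1})=\emptyset\iff\ell(vy)=\ell(v)+\ell(y)$. This is a standard Coxeter-theoretic identity, and indeed the paper cites the lemma from Kostant \cite{Ko}, so I would either quote it directly or give a brief induction on $\ell(y)$ using the exchange condition. Once that ingredient is in place, the rest of the argument is a short bookkeeping of roots, and no further geometric input is needed.
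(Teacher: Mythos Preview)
Your argument is correct. The paper itself does not prove this lemma at all: it simply cites it as Equation (5.13.2) of Kostant \cite{Ko} and moves on. So there is no ``paper's own proof'' to compare against here.

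What you have written is a clean, self-contained derivation. The case split on the sign of $y(\gamma)$ is the natural one, and the way you reduce both cases to the single Coxeter-theoretic fact $N(v)\cap N(y^{-1})=\emptyset \iff \ell(vy)=\ell(v)+\ell(y)$ is exactly right. Your verification that $y^{-1}N(v)\subseteq\Phi^+$ and that $y^{-1}N(v)\cap N(y)=\emptyset$ are both correct, as is the observation that for $\gamma\in N(y)$ one automatically has $-y(\gamma)\in N(y^{-1})$ and hence $-y(\gamma)\notin N(v)$. Since the paper treats this as a black-box citation, your version actually supplies more than the paper does; the only thing to decide is whether to quote the length-additivity criterion or to include the short inductive proof you mention.
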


The next lemma explicitly describes the projection map $\pi_J: G/B \to G/P_J$.  It is a short reformulation of the previous statements together with classical results that allow us to factor the unipotent subgroup as we wish.  Recall that each Schubert cell $C_w$ can be written as $U^w w B/B$ where $U^w \subseteq U$ is the maximal subgroup such that $w^{-1}U^w w$ is contained in the opposite unipotent, that is $U^w=U\cap wU^-w^{-1}$.  


\begin{lem}\label{proj properties}
Suppose that $w=vy$ with $y\in W_J$ and $v\in W^J$ and that $uwB \in G/B$ is any element of the Schubert cell $C_w$.  Then:
\begin{enumerate}
\item There is a unique way to write $uw$ as $u_1vu_2y$ where $u_1 \in U^v, u_2 \in U^y$.
\item The image of $uwB$ under the map $\pi_J: G/B \rightarrow G/P_J$ is $u_1vP_J$.
\item The preimage of $u_1vP_J$ under the map $\pi$ is $\bigsqcup_{y \in W_J} u_1vU^yyB$.
\item The projection $\pi_J$ restricts to an isomorphism on $C_v$.
\end{enumerate}
\end{lem}

\begin{proof}
Recall that a root subgroup of $U$ is the one-dimensional unipotent subgroup $U_{\gamma}=\exp(\mathfrak{g}_{\gamma})$ for each $\gamma\in \Phi$.  The subgroup $U^w$ is the product $U^w = \prod_{\gamma\in N(w^{-1})} U_{\gamma}$.  Moreover the unipotent subgroup $U$ can be factored as a product of root subgroups in any order \cite[\S 28.1]{H}.  Applying Lemma~\ref{fact: inversion sets} to the factorization $w^{-1}=y^{-1}v^{-1}$ gives $N(w^{-1}) = N(v^{-1}) \sqcup vN(y^{-1})$.  The definition of $U^w$ thus implies $U^w \cong U^v \times vU^yv^{-1}$ proving the first claim.  Since $y\in W_J$ we know $U^y \subseteq U\cap M_J$ and thus $u_2y\in P_J$.  This means $\pi_J(uwB) = u_1vP_J$ proving the second claim.  It now follows that 
\[\pi_J^{-1}(u_1vP_J) \subseteq \bigsqcup_{y \in W_J} u_1vU^yyB.\]
Remark~\ref{fact: shortest coset representatives} states that for each $u_1 \in U^v$ we have $v^{-1}u_1 v \notin P_J$ and so the containment is an equality, proving the third claim.  When restricted to $C_v$ the map $\pi_J$ is surjective (by Claim (2)) and injective (by Claim (3)), completing the proof.
\end{proof} 

\begin{rem}  Claim (4) of the lemma implies that $\pi_J(C_v)$ is the Schubert cell indexed by $v\in W^J$ in $G/P_J$.  We denote this Schubert cell by $C_v^{P_J}$.
\end{rem}


\subsection{The main pullback result}

The next theorem establishes a geometric relationship between the parabolic Hessenberg variety $\B(X,\fp_J)$ and the Springer fiber $\B^X$. It is the main result of this manuscript and holds for all nilpotent $X\in \fb$ and in all Lie types.

\begin{thm}\label{pullback}  Suppose $X\in \fb$ is nilpotent.  The pullback of the Steinberg variety $\pi_J(\B^X)$ under the projection $\pi_J: G/B \to G/P_J$ is the parabolic Hessenberg variety $\B(X,\fp_J)$.
\end{thm}

\begin{proof}  
Since $\B^X \subseteq \B(X,\fp_J)$ we know $\pi_J(\B(X,\fp_J))$ contains the Steinberg variety.  We need only confirm that each $gB \in \B(X,\fp_J)$ is sent to an element $\pi_J(gB) \in \pi_J(\B^X)$ in the Steinberg variety.  Let $gB\in \B(X,\fp_J)$ and write $g=uvp$ for some $u\in U^v$, $v\in W^J$, and $p\in P_J$ per Lemma~\ref{proj properties}.  We will show $uvB\in \B^X$.  Lemma~\ref{proj properties} says $\pi_J(gB) = \pi_J(uvB)$ so this will prove the claim.

By definition of parabolic Hessenberg varieties we know $p^{-1}v^{-1}u^{-1}\cdot X \in \fp_J$.  The parabolic $\fp_J$ is stable under adjoint action of $P_J$ so $v^{-1}u^{-1} \cdot X \in \fp_J$.  Since $X\in \fb$ and $u\in U$, we can write $u^{-1} \cdot X =\sum_{\gamma\in \Phi_Y} c_{\gamma}E_{\gamma}$ for some subset $\Phi_Y$ of positive roots and coefficients $c_{\gamma}\in \C$.  Thus
\[
	v^{-1}\cdot (u^{-1}\cdot X) = \sum_{\gamma\in \Phi_Y} c_{\gamma}E_{v^{-1}(\gamma)}.
\]
If this sum is not in $\mathfrak{b}$ then there is $\gamma\in \Phi_Y$ with $v^{-1}(\gamma)\in \Phi^-$.  We know $v^{-1}\cdot (u^{-1} \cdot X) \in \fp_J$ so $v^{-1}(\gamma)\in \Phi^-_J$.  But Remark \ref{fact: shortest coset representatives} tells us $v^{-1}(\gamma)\in \Phi^- - \Phi_J^-$.  From this contradiction we conclude $v^{-1}(\gamma)\in \Phi^+$ for all $\gamma\in \Phi_Y$ so $v^{-1}u^{-1}\cdot X \in \fb$ and $uvB\in \B^X$ as desired.
\end{proof}

We obtain the following corollary, which gives a formula for the dimension of each Hessenberg Schubert cell in terms of a corresponding Springer Schubert cell (or Steinberg Schubert cell in the partial flag variety $G/P_J$) .

\begin{cor}\label{parabolic dimension}  Fix $J \subseteq \Delta$ and $X\in \fb$. Let $w \in W$ and write $w=vy$ with $v\in W^J$ and $y\in W_J$.  If $wB \in \B(X,\fp_J)$ then 
\[
\dim(C_w\cap \B(X,\fp_J)) = \dim(C_v\cap \B^X) + \ell(y) = \dim(C_v^{P_J} \cap \pi_J(\B^X))+\ell(y).
\]
\end{cor}
\begin{proof} Let $gB\in C_w$ and write $gB=u_1vu_2yB$ for some $u_1\in U^v$ and $u_2\in U^y$ using Lemma~\ref{proj properties}.  Theorem~\ref{pullback} shows
\[
u_1vu_2yB\in C_w\cap \B(X,\fp_J)  \Leftrightarrow u_1vB\in C_v\cap \B(X,\fp_J)  \Leftrightarrow  u_1vB\in C_v\cap \B^X . 
\]
Together with Lemma~\ref{proj properties}, this shows that $\pi_J$ restricts to an isomorphism $C_v\cap \B^X \simeq C_v^{P_J} \cap \pi_J(\B^X)$ and proves the second desired equality. The first equality   also follows from Lemma~\ref{proj properties}, since the map $gB\mapsto (u_2, u_1vB)$ defines an isomorphism of varieties $C_w\cap \B(X,\fp_J) \to U^y \times (C_v \cap \B^X)$. 
\end{proof}

\subsection{Combinatorial corollaries}

We end this section with a collection of combinatorial corollaries of the pullback result.  The key is the following observation that the permutation flags in the parabolic Hessenberg variety $\B(X,\fp_J)$ are precisely the $W_J$-cosets of the permutation flags in the Springer fiber $\B^X$.  

\begin{cor}\label{cor.fixedpts} Let $X\in \fb$ and $w=vy$ with $v\in W^J$ and $y\in W_J$.  Then $wB\in \B(X,\fp_J)$ if and only if $vB\in \B^X$.
\end{cor}

We denote the subset of $W_J$-coset representatives of permutation flags in $\B^X$ by
\[W(X,J):=\{ v\in W^J \mid vB\in \B^X \}\]

\begin{example}\label{(2,2)ex}  Let $X\in \mathfrak{gl}_4(\C)$ be a nilpotent element of Jordan type $\lambda=(2,2)$.  If $X$ is in highest form as in Definition \ref{defn: highest form} then
	\[
		X=\begin{pmatrix} 0 & 0 & 1 & 0\\ 0 & 0 & 0 & 1\\ 0 & 0 & 0 & 0\\ 0 & 0 & 0 & 0 \end{pmatrix}
	\]
and $\Phi_X=\{ \alpha_1+\alpha_2, \alpha_2+\alpha_3 \}$.  If $J=\{ \alpha_1, \alpha_3 \}$ then $W_J$ is the subgroup of $S_n$ generated by $\{s_1, s_3\}$ and $W^J = \{ e, s_2, s_1s_2, s_3s_2, s_1s_3s_2, s_2s_1s_3s_2 \}$. We find the set $W(X,J)$ by checking whether $v^{-1}\cdot X$ is upper triangular for each $v \in W^J$, or equivalently whether $v^{-1}\Phi_X \subseteq \Phi^+_J$.  The following table computes $v^{-1}\alpha$ for each $v \in W_J$ and $\alpha \in \Phi_X$.
\[\begin{array}{c|c|c|c|c|c}
e & s_2 & s_1s_2& s_3s_2 & s_1s_3s_2& s_2s_1s_3s_2  \\

\cline{1-6} \alpha_1+\alpha_2 & \alpha_1&-\alpha_2 & \alpha_1+\alpha_2+\alpha_3&\alpha_3&-\alpha_1-\alpha_2\\
\alpha_2+\alpha_3 & \alpha_3&\alpha_1+\alpha_2+\alpha_3 &-\alpha_2&\alpha_1&-\alpha_2-\alpha_3\\
\end{array}\]

  We conclude $W(X,J)=\{ e, s_2, s_1s_3s_2 \}$.
\end{example}

We can use $W(X,J)$ to describe a paving of the Steinberg variety $\pi_J(\B^X)$ using the projection of the paving by Hessenberg Schubert cells of the parabolic Hessenberg variety $\B(X,\fp_J)$.  When $X$ is in a nilpotent conjugacy class satisfying the assumptions of Theorem~\ref{thm.paving}, this extends and improves on Fresse's result: he proved a paving exists for all Steinberg varieties \cite{Fresse}, but we add explicit information about the cells and their dimensions.  Our results apply to all nilpotents in type $A$, all nilpotents that are regular in a Levi in general type, and some other cases.

\begin{cor}\label{cor.paving} Suppose $X\in \fb$ is a nilpotent element satisfying the assumptions of Theorem~\ref{thm.paving}.  Then the intersection $C_v^{P_J} \cap \pi_J(\B^X)$ is nonempty if and only if $v\in W(X,J)$.  Furthermore, if $v\in W(X,J)$ then $C_v^{P_J} \cap \pi_J(\B^X)\simeq \C^{d_v}$ where $d_v = \dim(C_v\cap \B^X)$. 
\end{cor}
\begin{proof} Let $v\in W^J$. 
By Theorem~\ref{thm.paving} the cell $C_v\cap \B^X$ is nonempty if and only if $vB\in \B^X$.  The condition $vB \in \B^X$ is equivalent to $v \in W(X,J)$ by definition and to $vP \in \pi_J(\B^X)$ by Lemma~\ref{proj properties}.  The map $\pi_J$ restricts to an isomorphism $C_v\cap \B^X\simeq C_v^{P_J}\cap \pi_J(\B^X)$ so $C_v^{P_J}\cap \pi_J(\B^X)$ is nonempty if and only if $v\in W(X,J)$ in which case it has the same dimension as $C_v \cap \B^X$.  Finally, if $v\in W(X,J)$ then $C_v\cap \B^X \simeq \C^{d_v}$ by Theorem~\ref{thm.paving}.
\end{proof}

\begin{rem}
A priori, Corollary~\ref{cor.paving} only applies to those $X\in \mathfrak{gl}_n(\C)$ corresponding to the base filling of the partition $\lambda$ obtained by recording the sizes of the Jordan blocks of $X$ \textup{(}see~Definition~\ref{defn: highest form}\textup{)}.  However each $X'\in \mathfrak{gl}_n(\C)$ is conjugate to an $X'$ of the desired form.  Conjugating $X'$ is equivalent to translating the Springer fiber, in the sense that $\B^{g^{-1} \cdot X'} = g^{-1}\B^X$.  Since pavings are preserved under translation, we conclude that {\em all} Steinberg varieties $\pi_J(\B^{X'})$ are paved by affines in type $A$.
\end{rem}

Using these results, we prove the second main theorem of this section: a factorization of the Poincar\'e polynomial of a parabolic Hessenberg variety into the product of the Poincar\'{e} polynomials of a Steinberg variety and the flag variety of the Levi subgroup $M_J$.  We denote the Poincar\'{e} polynomial in variable $t$ of a variety $\mathcal{X}$ by $\mathsf{P}(\mathcal{X}, t)$.  Recall that $M_J/B_J = M_J/(B\cap M_J)$ denotes the flag variety of the Levi subgroup $M_J$. Note that the permutation flags of $M_J/B_J$ are precisely $y(B \cap M_J)$ for $y \in W_J$.

\begin{thm}\label{Poincare polynomial} Suppose $X\in \fb$ is a nilpotent element satisfying the assumptions of Theorem~\ref{thm.paving}.  Let $J\subseteq \Delta$. Then
\[
\mathsf{P}(\B(X,\fp_J),t) =  \mathsf{P}(\pi_J(\B^X), t)\mathsf{P}(\B_J, t).
\]
\end{thm}
\begin{proof}  By Corollary~\ref{cor.paving}, the intersections $C_v\cap \pi_J(\B^X)$ with $v\in W(X,J)$ pave $\pi_J(\B^X)$ and thus give the Betti numbers of the Steinberg variety (see Remark~\ref{betti}).  Since $\pi_J$ restricts to an isomorphism on $C_v \cap \B^X$ we  write
\begin{eqnarray}\label{eqn.polynomial}
\mathsf{P}(\pi_J(\B^X), t) = \sum_{v\in W(X,J)} t^{\dim (C_v\cap \pi_J(\B^X))} = \sum_{v\in W(X,J)} t^{\dim (C_v\cap \B^X)}.
\end{eqnarray}
Theorem~\ref{thm.paving} says that the nonempty intersections $C_w\cap \B(X,\fp_J)$ pave the Hessenberg variety $\B(X, \fp_J)$.  Corollary~\ref{cor.fixedpts} says $C_w\cap \B(X,\fp_J)\neq \emptyset$ if and only if $w=vy$ with $y\in W_J$ and $v\in W(X,J)$.  Applying Corollary~\ref{parabolic dimension}, we obtain:
\begin{eqnarray*}
\mathsf{P}(\B(X,\fp_J),t) &=& \sum_{v\in W(X,J)} \sum_{y\in W_J} t^{\dim(C_v\cap \B^X) }t^{\ell(y)}\\
	&=& \sum_{v\in W(X,J)} t^{\dim(C_v\cap \B^X)}\;\;  \sum_{y\in W_J} t^{\ell(y)} \\ 
	&=& \mathsf{P}(\pi_J(\B^X), t)\mathsf{P}(M_J/B_J, t)
\end{eqnarray*} 
which proves the desired result. \end{proof}



The next section strengthens these combinatorial results in the case of type $A$. Example~\ref{example: factorization formula} below demonstrates how Theorems~\ref{pullback} and~\ref{Poincare polynomial} can be used in that setting.


\section{Application in type $A$:  Betti numbers of Steinberg varieties}\label{section: Betti numbers of Steinberg} 

We give two main applications in type $A$.  The first, given in this section, computes the Betti numbers of Steinberg varieties using the combinatorics of row-semistrict tableaux.  The second, given in the next section, will show that the Betti numbers of parabolic Hessenberg varieties and Steinberg varieties match those of specific unions of Schubert varieties whenever the Jordan form of $X$ corresponds to a partition with at most three row or two columns.  

We begin with a subsection that summarizes the key combinatorial objects in the case of type $A$, especially tableaux and the kinds of inversions within tableaux that count dimensions in pavings of Springer fibers.  The second subsection adapts these combinatorial descriptions to partial flag varieties, combining them with the results in Section~\ref{parabolic} to give an explicit description of the Betti numbers of Steinberg varieties.

\subsection{Notation for type $A$}
When $\fg=\mathfrak{gl}_n(\C)$ both $X$ and $P_J$ are determined by partitions.  Let $\mu= (\mu_1,\mu_2, \ldots, \mu_k)$ be a partition of $n$. Associate a subset of simple roots to $\mu$ by the rule that
\[
J_\mu = \Delta \setminus \{ \alpha_{\mu_1}, \alpha_{\mu_1+\mu_2}, \ldots, \alpha_{\mu_1+\cdots+\mu_{k-1}} \}.
\]
The corresponding parabolic subalgebra $\fp_J$ for $J=J_\mu$ is the subalgebra of block-upper-triangular matrices whose block-sizes are determined by $J$.  Every subset $J\subseteq \Delta$ has the form $J=J_{\mu}$ for some composition $\mu$.  However we gain no generality by using compositions for $\mu$ since reordering blocks corresponds to conjugating the parabolic, which in turn induces an isomorphism $G/P \simeq G/(wPw^{-1})$.  

Let $\lambda$ be a partition of $n$. We let $X$ be the highest form representative of the conjugacy class of nilpotent matrices of Jordan type $\lambda$, as given in Definition~\ref{defn: highest form}.  

The permutation flags $wB$ in the Springer fiber $\B^X$ are in bijection with the row-strict tableaux, namely tableaux whose entries increase from left to right in each row.  The following result describes this bijection explicitly \cite[Theorem 7.1]{T}.

\begin{lem}\label{lem.points}  The permutation flag $wB$ is an element of $\B^X$ if and only if the tableau $T$ of shape $\lambda$ given by labeling the $i$-th box in the base filling of Definition~\ref{defn: highest form} by $w^{-1}(i)$ is a row-strict tableau.
\end{lem}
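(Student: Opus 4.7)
The plan is to unpack the definition of $\B^X$ using the explicit description of $X$ in highest form, and then reinterpret the resulting collection of inequalities as the row-strictness of $T$.

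First I would compute $w^{-1} \cdot X$ directly. Since $X$ is in highest form, the discussion after Definition \ref{defn: highest form} gives $X = \sum_{\{k \,:\, r_k \ne 0\}} E_{k,r_k}$, so
\[
w^{-1} \cdot X \;=\; w^{-1} X w \;=\; \sum_{\{k \,:\, r_k \ne 0\}} E_{w^{-1}(k),\, w^{-1}(r_k)}.
\]
Distinct summands have distinct index pairs, since $w^{-1}$ is a bijection and $k \ne r_k$, so no cancellation occurs. Thus $w^{-1} \cdot X \in \fb$ if and only if each elementary matrix appearing in the sum is (strictly) upper triangular, i.e., $w^{-1}(k) < w^{-1}(r_k)$ for every $k$ with $r_k \ne 0$. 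Equivalently, every root $w^{-1}(\gamma)$ with $\gamma \in \Phi_X$ lies in $\Phi^+$.

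Next I would translate these inequalities into the tableau $T$. By Definition \ref{defn: highest form}, the pair $(k, r_k)$ is precisely the pair of labels assigned to a box and the box immediately to its right in the base filling, and every horizontally adjacent pair in the base filling arises this way for a unique $k$. Since $T$ is obtained from the base filling by replacing each entry $i$ with $w^{-1}(i)$, the inequality $w^{-1}(k) < w^{-1}(r_k)$ says exactly that the entry of $T$ in the box originally labeled $k$ is smaller than the entry of $T$ in the box immediately to its right. Hence $w^{-1} \cdot X \in \fb$ holds if and only if every pair of horizontally adjacent entries of $T$ is strictly increasing.

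Finally, strict increase across each adjacent pair extends by transitivity to strict increase along an entire row, so the condition above is equivalent to $T$ being row-strict. Combining the two equivalences gives the lemma. I do not anticipate a serious obstacle: the argument is essentially an unfolding of definitions together with the observation that row-strictness is a local condition. The only bookkeeping is the identification between $\Phi_X$ and the horizontally adjacent pairs of boxes in the base filling, which is immediate from Definition \ref{defn: highest form}.
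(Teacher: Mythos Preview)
Your argument is correct. The paper does not actually give its own proof of this lemma; it is stated with attribution to Tymoczko and cited from \cite[Theorem 7.1]{T}, so there is nothing in the present paper to compare your proof against. Your direct computation of $w^{-1}\cdot X$ as $\sum E_{w^{-1}(k),\,w^{-1}(r_k)}$ and the identification of the resulting inequalities with row-strictness of $T$ is exactly the natural unfolding of the definitions, and is essentially how one would verify the statement from scratch.
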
 

For example, the identity permutation corresponds to the base filling of $\lambda$.  More generally, note that if $i$ labels a box in $T$ then the corresponding box in the base filling of $\lambda$ is labeled by $w(i)$.  

Not only do the row-strict tableaux of shape $\lambda$ index the nonempty Springer Schubert cells $C_w \cap \B^X$ but they encode the dimensions $\dim(C_w\cap \B^X)$.  The next lemma explains how, by counting certain inversions in the tableau $T$.  (It is an amalgamation of several earlier results that are itemized in the proof.) 

Let $\RST(\lambda)$ denote the set of all row-strict tableaux of shape $\lambda$.   Let $T$ be a row-strict tableau and $T[i]$ be the diagram obtained by restricting $T$ to the boxes labeled $1,\ldots,i$.  (Since $T$ is row-strict, the diagram $T[i]$ consists of rows of boxes without gaps in rows---in other words if a box is deleted, all boxes in the same row and to the right of that box must also have been deleted.)

\begin{lem}\label{counting rows} Suppose $wB\in \B^X$ and let $T\in \RST(\lambda)$ be the row-strict tableau corresponding to $w$ as in Lemma~\ref{lem.points}. Let $2\leq q \leq n$ and $\ell_{q-1}$ be the sum of
\begin{itemize}
\item the number of rows in $T[q]$ above the row containing $q$ and of the same length, plus 
\item the total number of rows in $T[q]$ of strictly greater length than the row containing $q$.
\end{itemize}
Then 
\[
\dim(C_w\cap \B^X) = \sum_{i=2}^n \ell_{i-1}
\]
We call $\ell_{q-1}$ the \textbf{number of $q$-row inversions of the diagram $T$}.  
\end{lem}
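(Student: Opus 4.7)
The plan is to reduce to Lemma \ref{paving lemma} and then translate between the root-theoretic and tableau descriptions of $w$. Since $\B^X$ corresponds to the Hessenberg space $H = \fb$, we have $\Phi_H^- = \emptyset$, and Lemma \ref{paving lemma} gives
\[
\dim(C_w \cap \B^X) = |N(w^{-1}) \cap \Phi(\V)^c|.
\]
I will interpret each positive root in $N(w^{-1}) \cap \Phi(\V)^c$ as a pair of entries $p < q$ of the tableau $T$ satisfying explicit geometric conditions, and show that for each $q$ the number of such pairs equals $\ell_{q-1}$.

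The first step is to describe $\Phi^+ \cap \Phi(\V)^c$ geometrically. Using the description of $\Phi(\V)$ from the Proposition at the end of Section \ref{preliminaries}, a positive root $\alpha_i + \cdots + \alpha_{j-1}$ (with $i < j$) lies in $\Phi(\V)^c$ precisely when, in the base filling, either $j$ sits directly above $i$ in the same column, or $j$ sits in the column immediately right of $i$ at the same row as $i$ or strictly below. The second case comes from unpacking $j \le r_i$ using the fact that the base filling orders $1, \ldots, n$ column-by-column from left to right and bottom-to-top within each column; the same bookkeeping shows the second case is automatically vacuous when the row of $i$ in $\lambda$ fails to extend to column $c_i + 1$.

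The second step translates the inversion condition. By construction each entry $m$ of $T$ sits at the position of $w(m)$ in the base filling, so an inversion $(i,j) \in N(w^{-1})$ (with $i < j$ and $w^{-1}(i) > w^{-1}(j)$) corresponds to entries $q = w^{-1}(i)$ and $p = w^{-1}(j)$ of $T$ satisfying $p < q$, with $q$ and $p$ occupying in $T$ the base-filling positions of $i$ and $j$ respectively. Combining with the first step shows that each root of $N(w^{-1}) \cap \Phi(\V)^c$ corresponds bijectively to a pair $p < q$ in $T$ satisfying either (a) $p$ is directly above $q$ in the same column of $T$, or (b) $p$ is in the column immediately right of $q$ in $T$ at a row strictly below that of $q$. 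The "same row" subcase of the base-filling description is automatically excluded by row-strictness: any entry immediately right of $q$ in its row exceeds $q$.

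Finally, for each $q \in \{2, \ldots, n\}$ let $n_q$ be the count of such pairs and let $\rho_q$, $c_q$ be the row and column of $q$ in $T$, so that the row of $q$ has length $c_q$ in $T[q]$. Pairs of type (a) are in bijection with rows $r < \rho_q$ whose length in $T[q]$ is $\geq c_q$; these split into rows of length equal to $c_q$ (the first part of $\ell_{q-1}$) and rows of length greater than $c_q$ sitting above $\rho_q$. Pairs of type (b) are in bijection with rows $r > \rho_q$ whose length in $T[q]$ is greater than $c_q$. Combining the two "greater than" contributions yields all rows $r \neq \rho_q$ of length greater than $c_q$ in $T[q]$ (since $\rho_q$'s own row has length exactly $c_q$), which is the second part of $\ell_{q-1}$. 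Hence $n_q = \ell_{q-1}$, and summing over $q$ completes the proof. The main obstacle is the dictionary of the first two steps, since one has to reconcile the base-filling order on integers with the row-column grid on $\lambda$; once the translation is in place, the counting reduces to a straightforward partition of rows of $T[q]$ by their length.
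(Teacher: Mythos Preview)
Your argument is correct. The paper does not actually prove this lemma from scratch: it cites \cite[Theorem~7.1]{T} for the identification of $\dim(C_w\cap\B^X)$ with the number of \emph{Springer dimension pairs}, and then \cite[Lemma~2.7]{PT} for the fact that $\ell_{q-1}$ counts the dimension pairs whose larger entry is $q$. Your route instead derives everything internally from Lemma~\ref{paving lemma} and the explicit description of $\Phi(\V)$ given in the Proposition at the end of Section~\ref{preliminaries}. In effect, your conditions (a) and (b) rediscover exactly the Springer dimension pairs of \cite{T}, and your final partition of the rows of $T[q]$ by their length is the content of \cite[Lemma~2.7]{PT}; what you gain is a self-contained proof that does not send the reader to two external references, at the cost of redoing the base-filling bookkeeping that those references already handle. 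One small wording issue: in step~1 your phrase ``$j$ sits directly above $i$'' must be read as ``anywhere above $i$ in the same column'' rather than ``immediately above,'' since every $j>i$ sharing $i$'s base-filling column lies in $\Phi(\V)^c$. Your step~3 bijection (one pair per row $r<\rho_q$ of length $\ge c_q$) is already consistent with that reading, so this is purely a matter of phrasing and not a gap.
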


\begin{proof}
Springer dimension pairs are a subset of the inversions in a filled tableau; the total number of Springer dimension pairs is equal to $\dim(C_w\cap \B^X)$ by work of the second author \cite[Theorem 7.1]{T}.   A Springer dimension pair $(p,q)$ satisfies:
\begin{enumerate} 
\item $1 \leq p<q \leq n$ and 
\item $q$ occurs in a box below $p$ and in the same column or in any column strictly to the left of $p$ in $T_v$ and
\item if the box directly to the right of $p$ in $T_v$ is filled by $r_{p}$ then $q\leq r_{p}$.
\end{enumerate}
The quantities $\ell_{q-1}$ count the number of Springer dimension pairs  of the form $(p,q)$ for $1 \leq p < q \leq n$ and so the sum of the $\ell_{q-1}$ also gives the total number of Springer dimension pairs \cite{PT, M}. 
\end{proof}

\begin{example}\label{ex.(2,2)contd} Continuing~Example \ref{(2,2)ex}, let $\lambda=(2,2)$ and $X\in \mathfrak{gl}_4(\C)$ be the corresponding nilpotent matrix. The following table displays all row-strict tableaux of shape $(2,2)$, records the corresponding permutation $w\in S_4$ such that $wB\in \B^X$, and computes $\dim(C_w\cap \B^X)$. 
\begin{center}
\begin{tabular}{l | c | c | c | c | c | c}
 \empty & \multirow{3}{*}{$\young(24,13)$} & \multirow{3}{*}{$\young(34,12)$} & \multirow{3}{*}{$\young(14,23)$} & \multirow{3}{*}{$\young(23,14)$} & \multirow{3}{*}{$\young(13,24)$} & \multirow{3}{*}{$\young(12,34)$}\\
 $T$ & & & & &\\
 & & & & &\\\hline
$w\in S_4$ & $e$ & $s_2$ & $s_1$ & $s_3$ & $s_1s_3$ & $s_1s_3s_2$ \\ \hline
$\dim(C_w\cap \B^X)$ & $0$ & $1$ & $1$ & $1$ & $2$ & $2$\\
\end{tabular}
\end{center}
For example, to see $\dim(C_{s_1s_3s_2}\cap \B^X)=2$ we compute $\ell_3=1$ \textup{(}since $T=T[4]$ has one row of length $\geq 2$ other than the row containing $4$\textup{)}, $\ell_2=1$ \textup{(}since $T[3]$ has one row of length $\geq 1$ other than the row containing $3$\textup{)}, and $\ell_1=0$ \textup{(}since $T[2]$ has only one row\textup{)}. 
\end{example}

\begin{example}\label{example: factorization formula}
We use Example~\ref{ex.(2,2)contd} to give an explicit example of the results from Section~\ref{parabolic}.  As in Example~\ref{(2,2)ex}, take $J=J_{(2,2)}=\{\alpha_1, \alpha_3\}$ so $W(X,J)=\{e, s_2, s_1s_3s_2\}$.  The Poincar\'e polynomial of the Steinberg variety $\pi_J(\B^X)$ is determined by the dimensions $\dim(C_v\cap \B^X)$ above when $v\in W(X,J)$.  Thus we have $\mathsf{P}(\pi_J(\B^X), t) = 1+t+t^2$.  

Since $W_{J}=\{ e, s_1, s_3, s_1s_3 \}$ Theorem~\ref{Poincare polynomial} gives the Poincar\'e polynomial of $\B(X,\fp_{(2,2)})$:
\begin{eqnarray*}
\mathsf{P}(\B(X,\fp_J), t) =  (1+t+t^2)(1+2t+t^2)= 1+3t+4t^2+3t^3+t^4.
\end{eqnarray*}
\end{example}

\subsection{Betti numbers of Steinberg varieties}  Using the main theorems of Section~\ref{parabolic}, we prove that the Betti numbers of Steinberg varieties are enumerated by row-semistrict  tableaux.  

\begin{defn}\label{def.semistandard} Let $\lambda$ and $\mu$ be partitions of $n$.  A \textbf{row-semistrict tableau of shape $\lambda$ and weight $\mu$} is a tableau $T$ of shape $\lambda$ with $\mu_1$ many $1$'s, $\mu_2$ many $2$'s, and so on, such that the entries in each row are weakly increasing. Let $\RSST(\lambda, \mu)$ denote the set of all row-semistrict tableaux of $\lambda$ and weight $\mu$.  If the entries in each column of $T$ are strictly increasing, then we say that $T$ is a \textbf{semistandard tableau of shape $\lambda$ and weight $\mu$} and let $\SST(\lambda,\mu)$ denote the subset of $\RSST(\lambda, \mu)$ of semistandard tableaux.
\end{defn}

There is a natural map from row-strict tableaux of shape $\lambda$ to row-semistrict tableaux of shape $\lambda$ and content $\mu$ obtained simply by repeating entries.  More precisely, relabel the first $\mu_1$ integers $1$, the next $\mu_2$ integers $2$, the next $\mu_3$ integers $3$, and so on.  For example, if $\mu=(3,2)$ then $1,2,3\mapsto 1$ and $4,5\mapsto 2$.  The \textbf{degeneration map} $\phi_{\lambda,\mu}: \RST(\lambda) \to \RSST(\lambda, \mu)$ is induced on row-strict tableaux by this relabeling.  

\begin{example}\label{example: RSST map} If $\lambda=\mu=(2,2)$ then $1,2 \mapsto 1$ and $3,4 \mapsto 2$ and thus:
\[
\phi_{(2,2),(2,2)}\left( \hspace{0.25em} \young(13,24) \hspace{0.25em} \right) = \young(12,12) \quad \textup{ and } \quad 
\phi_{(2,2),(2,2)}\left( \hspace{0.25em} \young(12,34) \hspace{0.25em} \right) = \young(11,22)\,.
\]
\end{example}

The degeneration map is not typically injective.  However, the next lemma tells us that when restricted   to the row-strict tableaux corresponding to $W(X,J_\mu)$, the degeneration map is bijective.  Let $\RST(\lambda,\mu)$ denote the set of all row-strict tableaux of shape $\lambda$ corresponding to $v\in W(X,J_\mu)$, namely obtained by labeling the $i$-th box in the base filling of $\lambda$ by $v^{-1}(i)$ for each $i$.  We have the following four related objects, which we collect here for the reader's convenience:
\begin{itemize}
\item $\RST(\lambda)$ is the set of all row-strict tableaux of shape $\lambda$
\item $\RST(\lambda,\mu)$ is the set of all row-strict tableaux of shape $\lambda$ corresponding to $v\in W(X,J_\mu)$
\item $\RSST(\lambda, \mu)$ is the set of all row-semistrict tableaux of shape $\lambda$ and weight $\mu$
\item $\SST(\lambda,\mu)$ is the set of semistandard tableaux of shape $\lambda$ and weight $\mu$.
\end{itemize}
The next result shows that $\phi_{\lambda \mu}$ is bijective on $\RST(\lambda,\mu)$ while a later result studies the preimage under $\phi_{\lambda \mu}$ of $\SST(\lambda,\mu)$.

\begin{lem}\label{lem.semistandard}   The restriction of the degeneration map to $\RST(\lambda,\mu)$ is bijective:
\[
\phi_{\lambda, \mu}: \RST(\lambda,\mu) \xrightarrow{\;\;\sim\;\;} \RSST(\lambda, \mu)
\]
\end{lem}
\begin{proof} We define a map $\psi_{\lambda,\mu}: \RSST(\lambda, \mu) \to \RST(\lambda, \mu)$ and prove that it is the inverse of $\phi_{\lambda, \mu}$.  

Let $T\in \RSST(\lambda, \mu)$.  The boxes of $T$ that are labeled by a fixed $i\in [k]$ are totally ordered by the base filling of $\lambda$.  Label these boxes, in order, with the integers $\mu_0+\mu_1+\cdots+\mu_{i-1}+1, \ldots , \mu_1 + \cdots+\mu_i$.  Proceeding in this fashion for each $i\in [k]$ gives a row-strict tableau, denoted $\psi_{\lambda, \mu}(T)\in \RST(\lambda,\mu)$.  By construction $\phi_{\lambda, \mu}\circ \psi_{\lambda,\mu}(T)=T$ for all $T\in \RSST(\lambda, \mu)$.

To complete the proof, we show $\psi_{\lambda, \mu}(T)$ corresponds to $v \in W(X,J_{\mu})$ (in the sense of Lemma~\ref{lem.points}) for each $T\in \RSST(\lambda, \mu)$.  By construction, writing the numbers that fill $\psi_{\lambda, \mu}(T)$ in order of the base filling of $\lambda$ gives the sequence $[v^{-1}(1), v^{-1}(2), \cdots , v^{-1}(n)]$ that is the one-line notation for $v^{-1}$.  Also by construction, the first $\mu_1$ numbers in this sequence are in increasing order, as are the next $\mu_2$, the $\mu_3$ after that, and so on.  Thus given a pair $p<q$ with $v^{-1}(p)>v^{-1}(q)$ we know that $p,q$ are in different ``blocks", meaning they cannot be a pair of the following form:
\[
\{(i,j)\mid  \mu_0+\cdots+\mu_{i-1}+1< p,q \leq \mu_1+\cdots+\mu_i \textup{ for some } i\in [k]  \},
\]
But the pairs $(p,q)$ in those ``blocks" are precisely the indices corresponding to the roots $\Phi_J$.  We have confirmed the condition in statement (2) of Remark~\ref{fact: shortest coset representatives} holds for $v$ so $v \in W^{J_\mu}$ and hence $v\in W(X,J_\mu)$.  Thus $\psi_{\lambda, \mu}\circ \phi_{\lambda, \mu}$ restricts to the identity on $\RST(\lambda, \mu)$, as desired. 
\end{proof}

\begin{example}\label{example: finding v_T}
Continuing the previous example, we observe that $\psi_{\lambda, \mu}$ sends
\[\young(12,12) \mapsto \young(24,13) \quad \textup{ and } \quad \young(11,22) \mapsto \young(12,34)\]
In both cases we have $\phi_{\lambda,\mu}(\psi_{\lambda,\mu}(T))=T$.  
\end{example}

The following proposition is a version of Lemmas~\ref{lem.points} and~\ref{counting rows} for Steinberg varieties.  Although similar descriptions of the irreducible components of Steinberg varieties have appeared in the literature \cite{Shimomura1, Shimomura2, Steinberg}, the formula below computes the entire Poincar\'e polynomial.  There are similar formulas for the Betti numbers of a different generalization of Springer fibers to partial flag varieties called \textit{Spaltenstein varieties} \cite{MR3853893, MR2827037}.  

\begin{prop}\label{prop.semistandard} Let $\lambda$ and $\mu$ be partitions of $n$ and assume $\mu$ has $k$ rows.  Let $X$ be the matrix in the nilpotent conjugacy class associated to $\lambda$ given in Definition~\ref{defn: highest form} and $J=J_\mu$.  For each $T\in \RSST(\lambda, \mu)$ let $d_T$ be the number of pairs $(p,q)\in [k]\times [k]$ {\bf{counted with multiplicity}} such that 
\begin{enumerate}
\item $p<q$ and
\item $q$ occurs in a box below $p$ and in the same column or in any column strictly to the left of $p$ in $T$ and
\item if the box directly to the right of $p$ in $T$ is filled by $r_p$ then $q\leq r_p$
\end{enumerate}
 Then
 \[
\mathsf{P}(\pi_J(\B^X), t) = \sum_{T\in \RSST(\lambda, \mu)} t^{d_T}.
\]
\end{prop}

\begin{proof} By Corollary~\ref{cor.paving}, the intersections $\{C_v\cap \pi_J(\B^X)\mid v\in W(X,J)\}$ pave $\pi_J(\B^X)$ and moreover $\dim(C_v\cap \pi_J(\B^X)) = \dim(C_v\cap \B^X)$. Lemma~\ref{lem.semistandard} shows that each $T \in \RSST(\lambda,\mu)$ corresponds to a unique $v \in W(X,J)$ since  $\phi_{\lambda,\mu}^{-1}(T) \in \RST(\lambda,\mu)$. Thus it suffices to show that $\dim(C_v\cap \B^X) = d_T$ for each $T \in \RSST(\lambda,\mu)$ whenever $v \in W(X,J_\mu)$ is the permutation corresponding to the tableau $T_v = \phi_{\lambda,\mu}^{-1}(T)$.  

By definition $\phi_{\lambda,\mu}(T_v) = T$.  The conditions on $(p,q)$ in Proposition~\ref{prop.semistandard} are precisely those from the proof of Lemma~\ref{counting rows} counting inversions in $T_v$.  Thus $\dim(C_v \cap \B^X) \geq d_T$ for each $v \in W(X,J_{\mu})$.  By Proposition~\ref{prop.semistandard} if $p'<q'$ satisfy $\mu_0+\mu_1+\cdots+\mu_i< p',q'\leq \mu_1+\cdots +\mu_{i}$ for some $i\in [k]$ then $v^{-1}(p')<v^{-1}(q')$.  Thus the degeneration map sends each inversion $(p',q')$ in $T_v$ to a pair $(p,q) \in [k] \times [k]$ with $p \neq q$ and so $(p,q)$ contributes to $d_T$.  This means $\dim(C_v \cap \B^X) = d_T$ and the claim is proved.
\end{proof}

\begin{example}\label{ex.semistandard} Let $\lambda=\mu=(2,2)$ as in Example~\ref{ex.(2,2)contd}.  The table below displays the three row-semistrict tableaux in $\RSST(\lambda,\mu)$ and the pairs counted by $d_T$ in each case.
\[
\begin{array}{c|c|c|c}
T\in \RSST(\lambda,\mu) & \young(12,12) & \young(22,11) & \young(11,22) \\
&&&\\
\cline{1-4} &&& \\
\textup{  pairs counted by $d_T$ } & \emptyset & (1,2) & (1,2), (1,2) \end{array}
\]
The pair $(1,2)$ is counted twice for the last row-semistrict tableau since there are two pairs satisfying the given conditions---one for each $2$ appearing in the second row of~$T$. 
\end{example}

By Corollary~\ref{cor.paving}, the dimension of the Steinberg variety $\pi_J(\B^X)$ is 
\[
\max \{ \dim(C_v\cap \B^X) \mid v\in W(X,J) \} \leq \dim(\B^X).
\]
Steinberg first counted the irreducible components of $\pi_J(\B^X)$ with maximal dimension $\dim(\B^X)$ in~\cite{Steinberg}.  The following corollary is a simpler proof of Steinberg's theorem, using only the affine paving and combinatorics of row-strict tableaux.  Recall that the {\em Kostka number} $K_{\lambda\mu}$ is the number of semistandard tableaux of shape $\lambda$ and weight $\mu$.  The Kostka number is an important quantity in algebraic combinatorics and representation theory.

\begin{cor}\label{cor.irredcomp} Let $\lambda$ and $\mu$ be partitions of $n$, $X\in \mathfrak{gl}_n(\C)$ the nilpotent matrix of Jordan type $\lambda$ fixed in Definition~\ref{defn: highest form}, and $J=J_\mu$.   There are exactly $K_{\lambda\mu}$ irreducible components of $\pi_J(\B^X)$ of dimension $\dim(\B^X)$.
\end{cor}

\begin{proof} 
First we identify the irreducible components of $\pi_J(\B^X)$ of dimension $\dim(\B^X)$.  Corollary~\ref{cor.paving} showed that $C_v^{P_J} \cap \pi_J(\B^X)$ is isomorphic to affine space so $\overline{C_v^{P_J}\cap \pi_J(\B^X)}$ is irreducible and nonempty for all $v\in W(X,J)$.  Furthermore if $\dim(C_v^{P_J}\cap \pi_J(\B^X)) = \dim(\B^X)$ then $\overline{C_v^{P_J}\cap \pi_J(\B^X)}$ must be an irreducible component.  If $v \in W(X,J)$ then Corollary~\ref{cor.paving} said $\dim(C_v^{P_J}\cap \pi_J(\B^X)) = \dim(C_v \cap\B^X)$. Finally, the dimension of $C_v\cap \B^X$ is maximal if and only if the corresponding row-strict tableau $T\in \RST(\lambda)$ is in fact a standard tableau (e.g.~\cite[Theorem 3.5]{PT}).  Thus we need to find the set of $v \in W(X,J)$ that correspond to standard tableaux.

To complete the proof, we argue that there are $K_{\lambda\mu}$ many such $v$.    We know that $\phi_{\lambda,\mu}: \RST(\lambda,\mu) \rightarrow \RSST(\lambda,\mu)$ is a bijection by Lemma~\ref{lem.semistandard}.  If $T \in \RSST(\lambda,\mu)$ is {\em not} semistandard--namely there is a column in which some $i$ appears twice--then its row-strict preimage is not column-strict, since the base filling of $\lambda$ increases bottom-to-top in columns.  If $T$ is semistandard then its row-strict preimage is column-strict by construction of the inverse map, and hence is standard.  Thus the unique preimage in $\RST(\lambda,\mu)$ of each  semistandard $T$ of shape $\lambda$ and weight $\mu$ must be standard.  The tableaux in $\RST(\lambda,\mu)$ are precisely those corresponding to $W(X,J)$ so this proves the claim.
\end{proof}

\begin{example}
Example~\ref{ex.semistandard} showed that when $\lambda=\mu=(2,2)$ the Steinberg variety $\B(X,\fp_J)$ has a single irreducible component of dimension $\dim(\B^X)=2$.  A key property of Kostka numbers is that $K_{\lambda\lambda}=1$ for all $\lambda$.  This confirms the results of Corollary~\ref{cor.irredcomp} in this case.  
\end{example}

We can use other classical properties of Kostka numbers to infer data about Steinberg varieties.  For instance, recall that $K_{\lambda\mu}=0$ whenever $\mu \not\trianglelefteq \lambda$, where $\trianglelefteq$ denotes the dominance order on partitions of $n$.  Corollary~\ref{cor.irredcomp} implies that the dimension of the Steinberg variety $\pi_J(\B^X))$ is strictly less than that of the Springer fiber $\B^X$ whenever $J=J_\mu$, $X$ is of Jordan type $\lambda$, and $\mu \not\trianglelefteq \lambda$.  In Section~\ref{Conclusion} we give an explicit example in which this occurs.


\section{Applications in type $A$: Parabolic Hessenberg varieties have the same Poincar\'{e} polynomial as unions of Schubert varieties}\label{Schubert points}

Our second application of the main theorem identifies specific unions of Schubert varieties whose Poincar\'{e} polynomials agree with those of parabolic Hessenberg varieties.  We use the same notation as in the previous section, again just treating type $A$.  Our strategy is to associate to each flag $wB \in \B(X,\fp_J)$ a permutation $w_T$ whose length is the dimension $\dim(C_w\cap \B(X,\fp_J))$ of the Hessenberg Schubert cell for $wB$.  We call $w_T$ the {\em Schubert point} corresponding to $w$.  We will show that the map $w \mapsto w_T$ preserves the set $W^J$.  We use this together with the decomposition $w_T = v_T y$ into a product of $v_T \in W^J$ and $y \in W_J$ to construct Schubert varieties whose permutation flags are a union of $W_J$-cosets.
Theorem~\ref{Main Theorem} proves that if $X\in \mathfrak{gl}_n(\C)$ is a matrix whose Jordan form corresponds to a partition with at most three rows or two columns, the Betti numbers of  $\B(X,\fp_J)$ match those of 
\[
\bigcup_{v\in W(X,J)} \overline{C}_{v_Tw_J}
\] 
where $w_J\in W_J$ denotes the longest element of $W_J$.   The theorem also gives an analogue for $\pi_J(\B^X)$.

Any parabolic Hessenberg variety that is not irreducible will correspond to the union of more than one Schubert variety.  The Schubert cells in their intersection are counted only once, not with multiplicity, which is the main subtlety of this theorem.

We begin with a canonical factorization of $W=S_n$ following Bj\"{o}rner-Brenti's presentation \cite[Corollary 2.4.6]{BB}.  Recall that the roots associated to the $i^{th}$ row of an upper-triangular matrix are
\[
\Phi_i=\{ \alpha_i, \alpha_i+\alpha_{i+1}, ..., \alpha_i+ \alpha_{i+1}+\cdots + \alpha_{n-1} \} \textup{ for each } 1\leq i \leq n-1.
\] 

\begin{lem}[Bj\"{o}rner-Brenti] \label{fact: strings} Each $w\in W$ can be written uniquely as $w=w_{n-1}w_{n-2}\cdots w_2w_1$ where 
	\[
		w_i=s_{k_i} s_{k_i+1} \cdots s_{i-1} s_i \textup{   for each  } i=1,...,n-1
	\] 
and either $w_i=e$ or $k_i$ is a fixed integer with $1\leq k_i \leq i$.   We call $w_i$ the $i$-th string of $w$.  Moreover
	\[
		w_1^{-1}w_2^{-1}\cdots w_{i-1}^{-1} N(w_i)\subseteq \Phi_i  \textup{   for each  } i=1,...,n-1.
	\]
\end{lem}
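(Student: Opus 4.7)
The plan is to handle the factorization by induction on $n$ and then deduce the inversion-set containment from the explicit form of $N(w_i)$.

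For the factorization, I would proceed by induction on $n$, using the parabolic decomposition $W = W^J \cdot W_J$ of \cite[Proposition 2.4.4]{BB} with $J = \{s_1, \ldots, s_{n-2}\}$. The subgroup $W_J \cong S_{n-1}$ is the stabilizer of $n$, and a direct check shows the set of minimum-length left-coset representatives is
\[
W^J = \{e\} \cup \{s_k s_{k+1} \cdots s_{n-1} : 1 \le k \le n-1\},
\]
parametrized by the value of $v(n)$: each such element has no right descent in $J$, and the $n$ elements give representatives of all $n$ cosets. Writing $w = v \cdot y$ uniquely with $v \in W^J$ and $y \in W_J$, set $w_{n-1} := v$, so that $k_{n-1} = w(n)$ when $w(n) < n$ and $w_{n-1} = e$ otherwise. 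The induction hypothesis applied to $y \in W_J \cong S_{n-1}$ yields $y = w_{n-2} \cdots w_1$ of the claimed form, so $w = w_{n-1} w_{n-2} \cdots w_1$. Uniqueness at each step is the uniqueness of the parabolic decomposition, and length-additivity $\ell(w) = \sum_i \ell(w_i)$ follows by iterating $\ell(vy) = \ell(v) + \ell(y)$.

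For the inversion-set containment, I would first compute $N(w_i)$ directly. Since $w_i = s_{k_i} s_{k_i+1} \cdots s_i$ is a reduced expression, the standard telescoping formula for the inversion set of a reduced word yields
\[
N(w_i) = \{\alpha_j + \alpha_{j+1} + \cdots + \alpha_i : k_i \le j \le i\}.
\]
Every root in $N(w_i)$ thus has simple-root support terminating at $\alpha_i$. The key observation is that each factor $w_j$ with $j < i$ is built from simple reflections $s_{k_j}, \ldots, s_j$ with indices strictly less than $i$; consequently $s_\ell$ with $\ell \le i - 1$ applied to a root of the form $\alpha_p + \cdots + \alpha_i$ (with $p \le i$) produces a positive root of the same shape $\alpha_{p'} + \cdots + \alpha_i$ (with $p' \le i$). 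This is verified by a short case analysis on whether $\ell$ equals $p-1$, equals $p$, lies strictly between $p$ and $i$, or lies outside $[p-1, i]$. Iterating this stability through $w_{i-1}^{-1}, w_{i-2}^{-1}, \ldots, w_1^{-1}$ preserves the family, yielding $w_1^{-1} w_2^{-1} \cdots w_{i-1}^{-1} N(w_i) \subseteq \Phi_i$.

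The main technical point is verifying the stability claim: that each simple reflection $s_\ell$ with $\ell < i$ sends roots terminating at $\alpha_i$ to positive roots still terminating at $\alpha_i$. The cases $\ell = p-1$ and $\ell = p$ respectively add a leading $\alpha_{p-1}$ or shift $p$ to $p+1$, both producing positive roots that still end at $\alpha_i$, while indices $\ell$ far from $p$ leave the root unchanged; crucially, the case $\ell = i$ (which would remove $\alpha_i$) is excluded by $\ell \le i-1$. Once this local computation is pinned down, both halves of the lemma follow as routine consequences of the parabolic coset decomposition and the inversion-set formula for reduced words.
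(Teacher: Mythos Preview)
The paper does not prove this lemma; the factorization is quoted from \cite[Corollary 2.4.6]{BB}, and the ``Moreover'' clause is asserted without argument. Your inductive proof of the factorization via the parabolic decomposition $W=W^{J}W_{J}$ with $J=\{s_1,\ldots,s_{n-2}\}$ is correct and is essentially the standard one.

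There is, however, a genuine mismatch in the second half. Your stability argument shows that $w_1^{-1}\cdots w_{i-1}^{-1}N(w_i)$ lies in the set of positive roots of the form $\alpha_p+\cdots+\alpha_i$ with $1\le p\le i$, i.e.\ the roots \emph{terminating} at $\alpha_i$ (column $i{+}1$ in the matrix picture). But the paper defines $\Phi_i=\{\alpha_i,\alpha_i+\alpha_{i+1},\ldots,\alpha_i+\cdots+\alpha_{n-1}\}$ as the roots \emph{beginning} at $\alpha_i$ (row $i$). These two sets meet only in $\alpha_i$, so the conclusion ``$\subseteq\Phi_i$'' in your last sentence does not follow from what you actually proved. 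In fact the containment as literally printed is false: for $n=3$ and $w=s_1s_2s_1$ one has $w_2=s_1s_2$, $w_1=s_1$, and
\[
w_1^{-1}N(w_2)=s_1\{\alpha_2,\ \alpha_1+\alpha_2\}=\{\alpha_1+\alpha_2,\ \alpha_2\}\not\subseteq\Phi_2=\{\alpha_2\}.
\]

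So your argument is internally sound but establishes the (correct) column version of the containment rather than the row version stated; the lemma as printed appears to carry a typo. Fortunately the only use of the containment later (in the proof of Proposition~\ref{shortest coset characterization}) is to conclude that the simple root $\alpha_i$ can lie in the $i$th piece $w_1^{-1}\cdots w_{i-1}^{-1}N(w_i)$ of $N(w)$ and in no other, and this follows equally well from your column version, since $\alpha_i$ is the unique simple root terminating at $\alpha_i$. You should flag the discrepancy explicitly rather than writing ``$\subseteq\Phi_i$'' as though the two sets agreed.
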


For example the longest word in $S_4$ can be written as $s_1s_2s_3s_1s_2s_1$.  In this case the strings are
\begin{itemize}
\item $w_3=s_1s_2s_3$
\item $w_2=s_1s_2$ and
\item $w_1=s_1$
\end{itemize} 
so $k_i=1$ for all $i=1,2,3$.   Note that if $w_i \neq e$ then $\ell(w_{i})=i-k_i+1$.  

In previous work the authors studied a bijection between $wB \in \B^X$ and certain permutations $w_{T}\in W$ whose lengths are the dimension of the corresponding Springer Schubert cells \cite[Definition 3.2]{PT}.  We define those permutations now.

\begin{defn}\label{defn: Schubert point}  Let $wB\in \B^X$ and let $T$ denote the corresponding row-strict tableau as in Lemma~\ref{lem.points}.  For each $2\leq q \leq n$ let $\ell_{q-1}$ be the number of $q$-row inversions of $T$ given in Lemma~\ref{counting rows}.  Define a string $w_{q-1}$ by 
\[
	w_{q-1} = \left\{  \begin{tabular}{l l} $s_{q-\ell_{q-1}} s_{q-\ell_{q-1}+1}\cdots s_{q-2}s_{q-1}$ & if $\ell_{q-1} \neq 0$\\
								$e$ & if $\ell_{q-1} =0$ \end{tabular} \right.
\]
so $w_{q-1}$ is a string of length $\ell_{q-1}$ by construction.  Then
	\[
		w_{T}=w_{n-1} w_{n-2} \cdots w_2 w_1
	\]
is the \textbf{Schubert point} associated to $wB\in \B^X$.  
\end{defn}

By construction 
\[\ell(w_{T})=\ell_{n-1}+ \ell_{n-2}+\cdots + \ell_1=\dim(C_w\cap \B^X).\]
In fact not only are the permutations $w_T$ in bijection with row-strict tableaux, but the set of Schubert points $\{w_T\mid  \mbox{$T$ is row-strict}\}$ forms a lower order ideal in the Bruhat graph whenever $\lambda$ has at most three rows or two columns---namely the Schubert points index a union of Schubert varieties \cite[Theorem 4.4]{PT}.

\begin{lem}[Precup-Tymoczko] \label{schubertpoint} For each $wB\in \B^X$ there exists a unique Schubert point $w_T\in W$.  In addition, if the Jordan form of $X$ corresponds to a partition with at most three rows or two columns then every permutation $w' \leq w_T$ in Bruhat order corresponds to a unique $yB\in \B^X$ such that $w'=y_{T'}$ for the row-strict tableau $T'$ corresponding to $y$.  
\end{lem}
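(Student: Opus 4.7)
The first assertion is essentially a matter of unpacking Definition \ref{defn: Schubert point}. Given $wB\in \B^X$, the corresponding row-strict tableau $T$ of shape $\lambda$ is uniquely determined, so Lemma \ref{counting rows} yields a well-defined sequence of row-inversion counts $\ell_1,\ldots,\ell_{n-1}$. Each $\ell_{q-1}$ prescribes a unique string $w_{q-1}$ of length $\ell_{q-1}$ ending in $s_{q-1}$, and by the uniqueness clause of Lemma \ref{fact: strings} the product $w_T=w_{n-1}w_{n-2}\cdots w_1$ is a well-defined element of $W$. Its length equals $\sum_q \ell_{q-1}$, which coincides with $\dim(C_w\cap \B^X)$ again by Lemma \ref{counting rows}.

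For the second assertion my plan is to first characterize intrinsically which elements of $W$ arise as Schubert points. Writing the target set as the collection of $w\in W$ whose canonical string decomposition $w=w_{n-1}\cdots w_1$ satisfies $\ell(w_{q-1})\in R_\lambda(q-1)$ for some subset $R_\lambda(q-1)\subseteq\{0,1,\ldots,q-1\}$ determined by $\lambda$, the core of the proof reduces to two claims: (i) when $\lambda$ has at most three rows or two columns, each $R_\lambda(q-1)$ is an interval $\{0,1,\ldots,M_{q-1}\}$, and any length sequence $(\ell'_{q-1})$ with $\ell'_{q-1}\in R_\lambda(q-1)$ is realized by some row-strict tableau $T'$; (ii) the Bruhat order relation $w'\leq w_T$ is equivalent to the entry-wise inequalities $\ell(w'_{q-1})\leq \ell(w_{q-1})$ between the string decompositions.

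To establish (i) I would insert entries $1,2,\ldots,n$ into the diagram of $\lambda$ one at a time, using the fact that $\ell_{q-1}$ depends only on which row of $T[q-1]$ receives the new entry $q$. For three-row shapes there are at most three candidate rows at each step, and for two-column shapes at most two, so a direct case analysis shows that every legal partial filling extends to a row-strict tableau realizing any prescribed value of $\ell_{q-1}\in R_\lambda(q-1)$, independent of the earlier choices. To establish (ii) I would use the string decomposition of $w_T$ as an explicit reduced expression together with the observation from Lemma \ref{fact: strings} that distinct strings act on disjoint portions of $\Phi^+$, so that subwords of this reduced expression reassemble canonically into string decompositions with entry-wise smaller lengths.

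The principal difficulty is claim (i): for general shapes $\lambda$ there are genuine global constraints linking $\ell_{q-1}$ and $\ell_{q'-1}$ for different $q,q'$, and consequently the set of Schubert points fails to be a lower order ideal. The restriction to partitions with at most three rows or two columns is precisely what eliminates these nonlocal obstructions, and I expect the case analysis establishing (i) to be the most delicate part of the argument; once (i) is in hand, the Bruhat compatibility in (ii) should follow in a relatively mechanical way from the string-decomposition machinery of Lemma \ref{fact: strings}.
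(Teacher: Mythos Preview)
The paper does not prove this lemma at all; it is quoted from \cite[Theorem 4.4]{PT} with the attribution ``Precup--Tymoczko'' in the lemma heading, so there is no in-paper argument to compare against.

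Your plan has a genuine gap at step~(ii). The assertion that $w'\leq w_T$ in Bruhat order is \emph{equivalent} to the entry-wise inequalities $\ell(w'_{q-1})\leq \ell((w_T)_{q-1})$ is false, and it is the direction you actually need that fails. Take $n=3$ and $w_T=s_1s_2$, whose string decomposition is $(w_2,w_1)=(s_1s_2,\,e)$ with lengths $(2,0)$. The element $w'=s_1$ has string decomposition $(e,\,s_1)$ with lengths $(0,1)$. Certainly $s_1\leq s_1s_2$ in Bruhat order, yet $(0,1)\not\leq(2,0)$ entry-wise. More concretely, the subword $s_1$ of the reduced expression $s_1s_2$ does \emph{not} ``reassemble into a string decomposition with entry-wise smaller lengths'': deleting $s_2$ from the $2$-string produces $s_1$, which is a $1$-string, not a shortened $2$-string. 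The containment in Lemma~\ref{fact: strings} concerns the conjugated inversion sets $w_1^{-1}\cdots w_{i-1}^{-1}N(w_i)\subseteq\Phi_i$, not the strings themselves, and it does not prevent letters from migrating between strings when you pass to subwords.

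Consequently, even if claim~(i) were correct and the Schubert points were exactly the $w$ with $\ell(w_{q-1})\leq M_{q-1}$ for independent bounds $M_{q-1}$, that description would not obviously give a Bruhat lower order ideal, and your argument for why it does collapses. The actual proof in \cite{PT} requires a more careful analysis tying the Bruhat relation to the tableau combinatorics; the entry-wise comparison of string lengths is too coarse an invariant to detect Bruhat order on its own.
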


Our plan to extend this result is to show that the Schubert points respect the decomposition $W^JW_J$.  More precisely we will show that $v\in W^J$ if and only if the Schubert point $v_{T}$ corresponding to $v$ is an element of $W^J$.  We begin with an alternate characterization of $W^J$. 

\begin{prop}\label{shortest coset characterization}  Let $w\in W$ and write $w= w_{n-1}w_{n-2}\cdots w_2 w_1$ where $w_i$ denotes the $i$-th string of $w$ for each $i=1,2,\ldots,n-1$.  Then $w\in W^J$ if and only if $\ell(w_i)\leq \ell(w_{i-1})$ for all $\alpha_i \in J$.
\end{prop}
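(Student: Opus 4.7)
The plan is to translate the question about $W^J$ into a computation of what the canonical factorization $w=w_{n-1}\cdots w_1$ does to the simple root $\alpha_i$. By Remark~\ref{fact: shortest coset representatives}(3), $w\in W^J$ is equivalent to $\alpha_i\notin N(w)$ for every $\alpha_i\in J$, so it suffices to prove that for each simple root $\alpha_i$,
\[
\alpha_i\in N(w)\iff \ell(w_i)>\ell(w_{i-1}),
\]
under the convention that $\ell(w_0)=0$ when $i=1$.

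My first move is to iterate Lemma~\ref{fact: inversion sets} along the canonical factorization to obtain
\[
N(w)=\bigsqcup_{j=1}^{n-1} w_1^{-1}w_2^{-1}\cdots w_{j-1}^{-1}N(w_j).
\]
By Lemma~\ref{fact: strings}, the $j$-th piece lies inside $\Phi_j$. Since the $\Phi_j$ are pairwise disjoint, the simple root $\alpha_i\in\Phi_i$ can lie in $N(w)$ only through the $j=i$ summand, and the problem reduces to determining when $w_{i-1}\cdots w_1(\alpha_i)$ lies in $N(w_i)$.

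Next I would compute both objects explicitly. Each string $w_j$ with $j\leq i-2$ is a product of simple reflections $s_{k_j},\ldots,s_j$ whose indices are at most $i-2$, and $s_m(\alpha_i)=\alpha_i$ for $|m-i|\geq 2$, so $w_{i-2}\cdots w_1$ fixes $\alpha_i$. Applying $w_{i-1}=s_{k_{i-1}}\cdots s_{i-1}$ to $\alpha_i$ one reflection at a time yields
\[
w_{i-1}(\alpha_i)=\alpha_{i-\ell(w_{i-1})}+\alpha_{i-\ell(w_{i-1})+1}+\cdots+\alpha_i,
\]
with the case $w_{i-1}=e$ recovered by reading the right-hand side as $\alpha_i$. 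A parallel direct computation, using that $w_i=s_{k_i}\cdots s_i$ acts as the cycle $(k_i,k_i+1,\ldots,i+1)$, shows
\[
N(w_i)=\{\alpha_a+\alpha_{a+1}+\cdots+\alpha_i : k_i\leq a\leq i\},
\]
which is empty when $w_i=e$. Since $k_i=i-\ell(w_i)+1$, the element $\alpha_{i-\ell(w_{i-1})}+\cdots+\alpha_i$ lies in $N(w_i)$ precisely when $\ell(w_{i-1})<\ell(w_i)$, which is the negation of the inequality $\ell(w_i)\leq\ell(w_{i-1})$ we seek.

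The hard part is mostly bookkeeping: the edge cases $w_{i-1}=e$, $w_i=e$, and the $i=1$ case (in which $w_{i-1}\cdots w_1$ is an empty product) need to be absorbed uniformly into the formulas above, and one must carefully verify that the pieces $w_1^{-1}\cdots w_{j-1}^{-1}N(w_j)$ supplied by Lemma~\ref{fact: strings} really do segregate according to $\Phi_j$ in the way needed to isolate the simple root $\alpha_i$ in the $j=i$ summand. Once the two root-theoretic computations are in place, the equivalence $\alpha_i\in N(w)\iff \ell(w_i)>\ell(w_{i-1})$ falls out, and combining with Remark~\ref{fact: shortest coset representatives} completes the proof.
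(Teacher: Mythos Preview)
Your proposal is correct and follows essentially the same route as the paper's proof: reduce via Remark~\ref{fact: shortest coset representatives} to showing $\alpha_i\in N(w)\iff \ell(w_i)>\ell(w_{i-1})$, decompose $N(w)$ along the string factorization using Lemmas~\ref{fact: inversion sets} and~\ref{fact: strings} to isolate the $j=i$ piece, and then compare explicit root computations. The only cosmetic difference is that you compute $w_{i-1}\cdots w_1(\alpha_i)$ and test its membership in $N(w_i)$, whereas the paper identifies the relevant element $\alpha_{k_{i-1}}+\cdots+\alpha_i\in N(w_i)$ and applies $w_1^{-1}\cdots w_{i-1}^{-1}$ to recover $\alpha_i$; these are the same calculation read in opposite directions.
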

\begin{proof}  We will prove the contrapositive statement using Remark \ref{fact: shortest coset representatives}, which says that $w$ is {\em not} in $W^J$ if and only if there is a simple root $\alpha_i \in J$ for which $\alpha_i \in N(w)$.  In particular we prove that for each simple root $\alpha_i \in J$, the root $\alpha_i \in N(w)$ if and only if $\ell(w_i) > \ell(w_{i-1})$.  

Since $\ell(w)=\ell(w_{n-1})+\ell(w_{n-2})+\cdots + \ell(w_2)+\ell(w_1)$ we can write
\[
N(w) = N(w_1) \sqcup w_1^{-1}N(w_2) \sqcup \cdots \sqcup w_1^{-1}w_2^{-1}\cdots w_{n-2}^{-1}N(w_{n-1})
\]
by Lemma \ref{fact: inversion sets}.  Given $\alpha_i \in J$ consider $w_i=s_{k_i}s_{k_i+1}\cdots s_{i-1}s_i$ and $w_{i-1}=s_{k_{i-1}} s_{k_{i-1}+1}\cdots s_{i-2}s_{i-1}$.  Note that
\begin{eqnarray}\label{inversion set}
N(w_i) = \{ \alpha_i, s_i(\alpha_{i-1}), ..., s_is_{i-1}\cdots s_{k_i+1}(\alpha_{k_i}) \}.
\end{eqnarray}
By Lemma \ref{fact: strings} we know $\alpha_i\in N(w)$ if and only if $\alpha_i \in w_1^{-1}w_2^{-1} \cdots w_{i-2}^{-1}w_{i-1}^{-1} N(w_i)$.  Since $\ell(w_i)= i-k_i +1$ we know
	\begin{eqnarray*}
		\ell(w_i)> \ell(w_{i-1}) \hspace{0.1in} \Leftrightarrow \hspace{0.1in} i-k_{i}+1> i-1 -k_{i-1}+1.
	\end{eqnarray*}
This in turn is equivalent to $k_i  \leq k_{i-1}$ and implies that the reflection $s_{k_{i-1}}$ must occur in the word $w_i=s_{k_i}s_{k_i+1}\cdots s_{i-1}s_i$.  The description of $N(w_i)$ in Equation (\ref{inversion set}) shows that this is the case if and only if 
	\[
		s_i s_{i-1}\cdots s_{k_{i-1}+1}(\alpha_{k_{i-1}}) = \alpha_{k_{i-1}}+\alpha_{k_{i-1}+1} + \cdots + \alpha_{i-1}+\alpha_i \in N(w_i).
	\]
Thus $k_i \leq k_{i-1}$ if and only if 
\begin{eqnarray*}
w_1^{-1}w_2^{-1} \cdots w_{i-2}^{-1}w_{i-1}^{-1} (\alpha_{k_{i-1}}+\alpha_{k_{i-1}+1} + \cdots + \alpha_{i-1}+\alpha_i ) \in N(w)\end{eqnarray*}
But 
\begin{eqnarray*}
&&w_{i-1}^{-1}(\alpha_{k_{i-1}}+\alpha_{k_{i-1}+1} + \cdots +\alpha_{i-1}+ \alpha_i )=\\
&&\;\;\;s_{i-1}s_{i-2}\cdots s_{k_{i-1}+1}s_{k_{i-1}}(\alpha_{k_{i-1}}+\alpha_{k_{i-1}+1} + \cdots +\alpha_{i-1}+\alpha_i ) = \alpha_i
\end{eqnarray*}
and $w_1, w_2, ..., w_{i-2}$ stabilize $\alpha_i$.  Putting this together, we conclude $\ell(w_i) > \ell(w_{i-1})$ if and only if $\alpha_i \in N(w)$
as desired.
\end{proof}

The previous lemma is the key step in the next proposition, which shows that if $v\in W^J$ indexes a permutation flag $vB\in \B^X$ then the corresponding Schubert point $v_{T}$ is also in $W^J$.

\begin{prop}\label{shortest cosets and the algorithm}  Let $vB\in \B^X$.  Then $v\in W^J$ if and only if $v_{T}\in W^J$.
\end{prop}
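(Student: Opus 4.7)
The plan is to prove the biconditional by using Proposition \ref{shortest coset characterization} to translate both sides into conditions on row inversions. By Definition \ref{defn: Schubert point}, the $i$-th string of $v_T$ has length exactly $\ell_i$, and the proof of Proposition \ref{shortest coset characterization} shows that for any $w\in W$ and any simple root $\alpha_i$, we have $\alpha_i\in N(w)$ if and only if $\ell(w_i)>\ell(w_{i-1})$. Applied to $v_T$, this gives that $v_T\in W^J$ if and only if $\ell_i\leq\ell_{i-1}$ for every $\alpha_i\in J$. Since $\alpha_i\in N(v)$ amounts to $v(i)>v(i+1)$ in type $A$, the proposition reduces to the claim that for each simple root $\alpha_i$, $v(i)>v(i+1)$ if and only if $\ell_i>\ell_{i-1}$.

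To prove this claim I would fix the positions in the tableau $T$: let $(r_i,c_i)$ and $(r_{i+1},c_{i+1})$ be the row and column of the entries $i$ and $i+1$ respectively. Because the base filling proceeds up each column and then left to right between columns, $v(i)<v(i+1)$ is equivalent to $c_i<c_{i+1}$, or to $c_i=c_{i+1}$ together with $r_i>r_{i+1}$. Moreover, in passing from $T[i]$ to $T[i+1]$ the only row whose length changes is $r_{i+1}$, whose length grows from $c_{i+1}-1$ to $c_{i+1}$. It is convenient to reinterpret $\ell_{q-1}$ from Lemma \ref{counting rows} as the rank of row $r_q$ in $T[q]$ under the lexicographic order on rows that prefers longer rows first and, among rows of equal length, smaller row index first.

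The proof is then a case analysis on the relation between $c_i$ and $c_{i+1}$. If $c_i<c_{i+1}$, so $v(i)<v(i+1)$, then $\ell_i$ is bounded above by the number of rows in $T[i]$ of length at least $c_{i+1}$ (after removing the contribution of $r_{i+1}$ itself), which is at most the number of rows in $T[i]$ of length greater than $c_i$, hence at most $\ell_{i-1}$. If $c_i=c_{i+1}$, then $r_i\neq r_{i+1}$ and both rows have equal length in $T[i+1]$; a direct comparison of rank formulas shows that when $r_i<r_{i+1}$ (the case $v(i)>v(i+1)$), row $r_i$ itself appears among the rows counted before $r_{i+1}$ and so $\ell_i>\ell_{i-1}$, whereas when $r_i>r_{i+1}$ the reverse inequality holds. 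Finally if $c_i>c_{i+1}$, so $v(i)>v(i+1)$, then row $r_i$ has strictly greater length than row $r_{i+1}$ in $T[i+1]$ and therefore precedes $r_{i+1}$ in the lexicographic order; because the new length $c_{i+1}$ of row $r_{i+1}$ is still strictly less than $c_i$, the rank of row $r_i$ is unchanged from $T[i]$ to $T[i+1]$ and equals $\ell_{i-1}$, which forces $\ell_i>\ell_{i-1}$.

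The main obstacle will be keeping the bookkeeping organized in the intermediate case $c_i=c_{i+1}$, where one must track which rows of length exactly $c_i$ lie between rows $r_i$ and $r_{i+1}$ and whether they enter or leave the count. Reinterpreting $\ell_{q-1}$ as a rank, rather than working from its defining two-term formula, reduces each case to a nearly one-line comparison, after which the equivalence $v(i)>v(i+1)\Leftrightarrow\ell_i>\ell_{i-1}$ is immediate and the proposition follows.
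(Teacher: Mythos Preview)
Your proposal is correct and follows essentially the same approach as the paper: both reduce the question, via Proposition~\ref{shortest coset characterization} and Definition~\ref{defn: Schubert point}, to showing that $v(i)>v(i+1)$ if and only if $\ell_i>\ell_{i-1}$, and then argue this by comparing the row containing $i$ with the row containing $i+1$ in $T[i]$ versus $T[i+1]$. The paper packages the comparison slightly more compactly by defining a single set $\mathcal{R}$ of rows that witnesses $\ell_i$ and then checking whether the row of $i$ lies in $\mathcal{R}$, whereas you do an explicit three-way case split on $c_i$ versus $c_{i+1}$ using your rank interpretation of $\ell_{q-1}$; these are the same argument organized differently.
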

\begin{proof}  Let $T$ denote the row-strict tableau associated to $v$.  We decompose $v_{T}$ into  $i$-strings as $v_{T}=v_{n-1} v_{n-2} \cdots v_2 v_{1}$.  Throughout this proof, assume $i$ satisfies $1\leq i\leq n-1$ and $\alpha_i\in J$.

By definition $\ell(v_i)=\ell_{i}$ and $\ell(v_{i-1})=\ell_{i-1}$ so by Proposition \ref{shortest coset characterization} and Remark \ref{fact: shortest coset representatives} we have only to show that $\alpha_i \notin N(v)$ if and only if $\ell_{i}\leq \ell_{i-1}$.  First $\alpha_i\notin N(v)$ if and only if $v(i)<v(i+1)$ by definition of inversions.  Since $i$ fills the box labeled by $v(i)$ in the base filling of $\lambda$, the inequality $v(i)<v(i+1)$ holds if and only if $i$ occurs in a box of $T$
\begin{itemize}
\item in the same column and below $i+1$, or
\item in a column to the left of $i+1$.
\end{itemize}   
Now consider $T[i]$ and $T[i+1]$.  We obtain $T[i]$ from $T[i+1]$ by removing the box containing $i+1$.  Lemma \ref{counting rows} states that $\ell_{i}$ counts the number of rows in $T[i+1]$ above the row containing $i+1$ and of equal length plus the total number of rows in $T[i+1]$ of length strictly greater than the row with $i+1$.  These rows each have the same length in $T[i]$ since they do not contain $i+1$; denote the set of rows by $\mathcal{R}$.  If $i$ satisfies either bulleted condition above then each row in $\mathcal{R}$ contributes one $i$-row inversion of $T$ to the count of $\ell_{i-1}$ so by Lemma \ref{counting rows} we have $\ell_{i}=|\mathcal{R}|\leq \ell_{i-1}$.  Conversely if $i$ satisfies neither bulleted condition then $\ell_{i-1}$ counts only a subset of $\mathcal{R}$  since $\mathcal{R}$ includes the row containing $i$.  Therefore $\ell_{i-1}<|\mathcal{R}|=\ell_{i}$. This proves the claim. 
\end{proof}

\begin{cor}\label{cor.lowerideal}  Suppose $X$ corresponds to a partition with at most three rows or two columns.  Then the set $\{ v_T\in W^J\mid v\in W^J \textup{  and  } vB\in \B^X \}$ is a lower order ideal with respect to Bruhat order on $W^J$.  In other words if $v'\in W^J$ and $v'\leq v_{T}$ for some $v_{T}$ in the set, then $v'$ is also an element of the set.  
\end{cor}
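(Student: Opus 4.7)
The plan is to chain together the two immediately preceding results. First I would note that Bruhat order on $W^J$ is nothing other than the restriction of Bruhat order on $W$ to the set of minimal-length coset representatives; this is a standard property of the parabolic quotient \cite{BB}. Hence any $v' \le v_T$ in $W^J$ also satisfies $v' \le v_T$ in $W$.

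Next I would apply Lemma \ref{schubertpoint}: because $X$ has Jordan type with at most three rows or two columns, the hypothesis $v' \le v_T$ (with $v_T$ a Schubert point of some $vB\in\B^X$) produces a unique flag $yB \in \B^X$ with row-strict tableau $T'$ such that $v' = y_{T'}$. In other words, $v'$ is automatically realized as a Schubert point of some Springer point; what remains is to verify that this $y$ can be chosen from $W^J$.

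Finally I would invoke Proposition \ref{shortest cosets and the algorithm} for the permutation $y$: it asserts that $y \in W^J$ if and only if $y_{T'} \in W^J$. Since by hypothesis $v' = y_{T'}$ lies in $W^J$, this forces $y \in W^J$, and therefore $v' = y_{T'}$ belongs to the set $\{v_T : v \in W^J,\ vB \in \B^X\}$, as required.

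The corollary essentially packages Lemma \ref{schubertpoint} together with Proposition \ref{shortest cosets and the algorithm}, so no substantial obstacle arises. The restriction on Jordan type enters only through Lemma \ref{schubertpoint}; Proposition \ref{shortest cosets and the algorithm} imposes no such condition. The only point that requires a moment's thought is the agreement of the two Bruhat orders on $W^J$, which lets the conclusion of Lemma \ref{schubertpoint} be imported into the parabolic quotient without loss.
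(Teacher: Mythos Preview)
Your proposal is correct and follows essentially the same route as the paper: apply Lemma~\ref{schubertpoint} to realize $v'$ as a Schubert point $y_{T'}$, then use Proposition~\ref{shortest cosets and the algorithm} to pull $y$ back into $W^J$. Your explicit remark that Bruhat order on $W^J$ is the restriction of Bruhat order on $W$ is a helpful clarification the paper leaves implicit.
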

\begin{proof}  To prove this, we show that for each $v'\in W^J$ such that $v'\leq v_T$ there exists $y\in W^J$ with $yB \in \B^X$ and row-strict tableau $T'$ such that $v'=y_{T'}$.  By Proposition \ref{schubertpoint}, there exists a unique $yB\in \B^X$ and corresponding row-strict tableau $T'$ such that $v'=y_{T'}$.  By Proposition \ref{shortest cosets and the algorithm} this $y$ must also be an element of $W^J$ since $y_{T'}$ is.
\end{proof}

\begin{rem}
It's also important to note what this corollary does not say: this set is a lower order ideal in $W^J$ but {\em not necessarily} in $W$.  The next example shows how this can happen.
\end{rem}

\begin{example}\label{example: parabolic union}
Continue our example when $\lambda=\mu=(2,2)$.  Example~\ref{example: factorization formula} gave the set $W(X,J_{(2,2)}) = \{e,s_2,s_1s_3s_2\}$.  Example~\ref{ex.(2,2)contd} listed the row-strict tableaux corresponding to the elements in $W(X,J_{(2,2)})$.  The permutation $s_1s_3s_2$ corresponds to $T= \young(12,34)$ and Example~\ref{ex.(2,2)contd} explained that $\ell_3=\ell_2=1$ were the only nonzero contributions to the dimension.  By definition we obtain $v_{T} = s_3s_2$.  Similarly the row-strict tableau corresponding to $s_2$ is $T'=\young(34,12)$ with $v_{T'}=s_2$ and $e$ corresponds to the base filling, so $\{v_T: v \in W(X,J)\} = \{s_3s_2,s_2,e\}$ in this case.  Note that $s_3$ is not in this set, though $s_3 < s_3s_2$ in Bruhat order.  This is because $s_3 \not \in W^J$. 
\end{example}

Corollary~\ref{cor.lowerideal} immediately implies that the Poincar\'{e} polynomial of the Steinberg variety agrees with that of a union of Schubert varieties \textit{in the partial flag variety}.

\begin{cor}\label{corollary: steinberg schuberts}
Suppose $X\in \mathfrak{gl}_n(\C)$ is nilpotent with Jordan form corresponding to a partition $\lambda$ with at most three rows or two columns.  Then the following Poincar\'e polynomials are equal:
\[
\mathsf{P}(\pi_J(\B^X), t) = \mathsf{P}\left( \cup_{v\in W(X,J)} \overline{C}_{v_T}^{P_J}, t \right)\]
where $\overline{C}_{v_T}^{P_J}$ is a Schubert variety in the partial flag variety $G/P_J$.
\end{cor}

\begin{proof}
Corollary~\ref{cor.paving} tells us that the Steinberg variety is paved by the cells $C_v^{P_J}\cap\pi_J(\B^X)$ for $v \in W(X,J)$ and that $\dim(C_v^{P_J} \cap \pi_J(\B^X)) = \dim(C_v\cap \B^X)$ 
for each of these cells.  In addition $\dim(C_v\cap \B^X) = \ell(v_T)$ by construction.  Corollary~\ref{cor.lowerideal} now tells us that $\{v^T \in W^J: v \in W(X,J)\}$ is a lower order ideal.  Since $W^J$ indexes the permutation flags in $G/P_J$ this means the union of Schubert varieties $\overline{C}_{v_T}^{P_J}$ in the partial flag variety $G/P_J$ has the same Poincar\'{e} polynomial as the Steinberg variety, as desired.
\end{proof}

\begin{example}
Continuing our running example, Example~\ref{example: factorization formula} showed that when $\lambda=\mu=(2,2)$ the Poincar\'{e} polynomial of the Steinberg variety $\pi_J(\B^X)$ is $1+t+t^2$.  This is also the Poincar\'{e} polynomial of the Schubert variety $\overline{C}_{s_3s_2}^{P_J}$ in $G/P_J$.  \textup{(}In contrast, the Poincar\'{e} polynomial of the Schubert variety $\overline{C}_{s_3s_2} \subseteq G/B$ is $1+2t+t^2$.\textup{)}
\end{example}

We are now ready to state and prove the main theorem of this section. 

\begin{thm}\label{Main Theorem}  Suppose $X\in \mathfrak{gl}_n(\C)$ is nilpotent with Jordan form corresponding to a partition $\lambda$ with at most three rows or two columns.  Then the following Poincar\'e polynomials are equal:
\[
\mathsf{P}(\B(X,\fp_J), t) = \mathsf{P} \left(\cup_{v \in W(X,J)} \overline{C}_{v_{T}w_J}, t\right) 
\]
where $w_{J}$ denotes the longest word in $W_J$.
\end{thm}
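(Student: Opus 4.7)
The plan is to compute the Poincar\'{e} polynomial of the union $U = \bigcup_{v \in W(X,J)} \overline{C}_{v_T w_J}$ directly by decomposing $U$ into Schubert cells, and then match the result against the expression for $P(\B(X,\fp_J),t)$ coming from Corollary \ref{Poincare polynomial}. By the defining property of the Schubert point, $\ell(v_T) = \dim(C_v \cap \B^X)$, so Corollary \ref{Poincare polynomial} already gives
$$P(\B(X,\fp_J),t) = \sum_{v\in W(X,J)} t^{\ell(v_T)} P(\B_J,t).$$
It therefore suffices to show that $P(U,t)$ equals the right-hand side.

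To carry out the decomposition I would combine two ingredients. First, Proposition \ref{shortest cosets and the algorithm} guarantees that $v_T \in W^J$ for every $v \in W(X,J)$. Combined with a standard fact about parabolic quotients of Coxeter groups (see \cite{BB}), this implies that for $w_J$ the longest element of $W_J$, the Bruhat interval $\{u : u \leq v_T w_J\}$ consists exactly of those $u = v'y'$ with $v' \in W^J$ satisfying $v' \leq v_T$ and $y' \in W_J$ arbitrary; geometrically this reflects the fact that $\overline{C}_{v_T w_J}$ is the preimage under $G/B \to G/P_J$ of the Schubert variety indexed by $v_T W_J$. Second, the hypothesis that the Jordan type of $X$ has at most three rows or two columns enters through the preceding corollary, which says that $\{v_T : v \in W(X,J)\}$ is a lower order ideal in $W^J$. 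Together these imply that every $v' \in W^J$ with $v' \leq v_T$ for some $v \in W(X,J)$ is itself of the form $v_T$ for a (unique) $v \in W(X,J)$, so that the Schubert cells in $U$ are precisely
$$U = \bigsqcup_{v \in W(X,J)} \bigsqcup_{y' \in W_J} C_{v_T y'}.$$

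Since $v_T \in W^J$ and $y' \in W_J$ give $\ell(v_T y') = \ell(v_T) + \ell(y')$, the Poincar\'{e} polynomial of $U$ read off from this affine paving factors as
$$P(U, t) = \sum_{v \in W(X,J)} t^{\ell(v_T)} \sum_{y' \in W_J} t^{\ell(y')} = \sum_{v \in W(X,J)} t^{\ell(v_T)} P(\B_J, t),$$
which matches the expression for $P(\B(X,\fp_J),t)$ above.

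The most delicate ingredient is the Coxeter-theoretic description of the Bruhat interval below $v_T w_J$ in terms of the $W^J W_J$-factorization. This fact is standard, so I would either cite it from \cite{BB} or verify it briefly using the subword characterization of Bruhat order together with the length additivity $\ell(v_T w_J) = \ell(v_T) + \ell(w_J)$. Everything else in the argument is a bookkeeping consequence of the three-row/two-column Schubert point machinery already established in Section \ref{Schubert Points}.
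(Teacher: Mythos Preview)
Your proposal is correct and follows essentially the same route as the paper: decompose the union $\bigcup_{v\in W(X,J)}\overline{C}_{v_Tw_J}$ into the disjoint union $\bigsqcup_{v\in W(X,J)}\bigsqcup_{y\in W_J} C_{v_Ty}$, read off its Poincar\'e polynomial as $\sum_{v\in W(X,J)} t^{\ell(v_T)}P(\B_J,t)$, and match this with Corollary~\ref{Poincare polynomial} using $\ell(v_T)=\dim(C_v\cap\B^X)$. If anything, your write-up is more explicit than the paper's about why the cell decomposition holds---you spell out that it rests on (i) $v_T\in W^J$ from Proposition~\ref{shortest cosets and the algorithm}, (ii) the standard description of the Bruhat interval below $v_Tw_J$, and (iii) the lower-order-ideal property of $\{v_T:v\in W(X,J)\}$ from the preceding corollary---whereas the paper compresses these into a single clause.
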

\begin{proof} Note that the union of Schubert varieties is the disjoint union of Schubert cells
\[
\bigcup_{v \in W(X,J)} \overline{C}_{v_{T}w_J}												
= \bigsqcup_{v \in W(X,J) } \bigsqcup_{y\in W_J} C_{v_{T} y}
\]
because Schubert points are distinct and because $W(X,J)$ is a subset of coset representatives for $W/W_J$.  Recall that $M_J/B_J$ denotes the flag variety $M_J/(B\cap M_J)$ of $M_J$ and in particular that $\mathsf{P}(M_J/B_J,t) = \sum_{y \in W_J}t^{\ell(y)}$.  Thus we have
\begin{eqnarray*}
\mathsf{P}(\cup_{v  \in W(X,J)} \overline{C}_{v_{T}w_J} , t ) 										
&=& \sum_{v \in W(X,J)} t^{\ell(v_{T})} \mathsf{P}(M_J/B_J,t)\\
&=& \sum_{v \in W(X,J)} t^{ \dim(C_v\cap \B^X) } \mathsf{P}(M_J/B_J,t)\\
&=& \mathsf{P}(\B(X,\fp_J), t)
\end{eqnarray*}
where the last two equalities follow from Definition~\ref{defn: Schubert point} and Corollary~\ref{Poincare polynomial}, respectively.
\end{proof}

\begin{example}
Example~\ref{example: factorization formula} studied the parabolic Hessenberg variety when $X$ is nilpotent of Jordan type $\lambda= (2,2)$ and $J$ corresponds to the partition $(2,2)$ and found its Poincar\'{e} polynomial:
\[\mathsf{P}(\B(X,\fp_J), t) =  (1+t+t^2)(1+2t+t^2)= 1+3t+4t^2+3t^3+t^4.\]
This is precisely the  Poincar\'e polynomial of the Schubert variety $\overline{C}_{s_3s_2s_3s_1}$ computed in Example~\ref{Schubertex}.  
\end{example}


\section{Components of parabolic Hessenberg varieties} \label{Conclusion}

One natural follow-up question is whether the combinatorial results of Proposition~\ref{prop.semistandard}, Corollary~\ref{corollary: steinberg schuberts}, and Theorem~\ref{Main Theorem} reflect an underlying geometric property.  We now give one result in this direction, proving that the irreducible components of parabolic Hessenberg varieties are in bijection with the irreducible components of a Steinberg variety.  The following is the main result of this section, and holds in all Lie types.

\begin{thm}\label{thm.components} Fix $X\in \fb$.  Let $\pi_J: G/B \to G/P_J$ be the projection $\pi_J (gB) = gP_J$.  Under this map, the irreducible components of parabolic Hessenberg variety $\B(X,\fp_J)$ are in bijection with those of the Steinberg variety $\pi_J(\B^X)$.
\end{thm}

\begin{proof} Let $\B(X,\fp_J) = \cup_{i\in I}\cx_i$ be the decomposition of $\B(X,\fp_J)$ into irreducible components.  The map $\pi_J$ is continuous so each $\pi_J(\cx_i)$ is irreducible. Theorem~\ref{pullback} showed that $\pi_J(\B(X,\fp_J)) = \pi_J(\B^X)$  so $\pi_J(\B^X)$ can be written as a union $\cup_{i\in I} \pi_J(\cx_i)$.  To show that each $\pi_J(\cx_i)$ is a component, we prove that if $\pi_J(\cx_i)\subseteq \pi_J(\cx_j)$ then $i=j$. If $\pi_J(\cx_i)\subseteq \pi_J(\cx_j)$ then naturally $\pi_J^{-1}\pi_J(\cx_i)\subseteq \pi_J^{-1}\pi_J(\cx_j)$.  Thus it suffices to show that $\pi_J^{-1}\pi_J(\cx_i) = \cx_i$ since the $\cx_i$ are by definition components.

Suppose $g_1B\in \pi_J^{-1}(\pi_J(\cx_i))$.  Since $\pi_J(g_1B) \in \pi_J(\cx_i)$ there exists $g_2B \in \cx_i$ with $\pi_J(g_1B)=\pi_J(g_2B)$.  By statements (2) and (3) of Lemma~\ref{proj properties} we can write $g_1 = uvu_1y_1$ and $g_2=uvu_2y_2$ where $v \in W^J$, $y_1$ and $y_2$ are both in $W_J$, and $u\in U^v$, $u_1 \in U^{y_1}$, $u_2 \in U^{y_2}$.

Let $\mathcal{Z} = \{uvmB\mid m\in M_J\}\subseteq \B(X, \fp_J)$.  Then $g_1B, g_2B\in \mathcal{Z}$ and $\mathcal{Z}$ is isomorphic to the flag variety $M_J/B_J$.  Therefore $\mathcal{Z}$ is an irreducible subvariety of $\B(X,\fp_J)$, and must be contained in a single irreducible component of $\B(X,\fp_J)$.  This implies $\mathcal{Z}\subseteq \mathcal{X}_i$ so $g_1B\in \cx_i$ as desired. 
\end{proof}

As an immediate corollary, we conclude that in type $A$, the number of irreducible components of $\B(X,\fp_J)$ with dimension $\dim(\B^X) + \ell(w_J)$ is the Kostka number $K_{\lambda \mu}$.  The proof just applies Corollary~\ref{cor.irredcomp}, namely Steinberg's result on $\pi_J(\B^X)$.  

\begin{cor}\label{cor.componentsA} Let $\lambda$ and $\mu$ be partitions of $n$, $X\in \mathfrak{gl}_n(\C)$ be a nilpotent matrix with Jordan form determined by $\lambda$, and $J=J_\mu$.  The number of irreducible components of $\B(X,\fp_J)$ of dimension $\dim(\B^X) + \ell(w_J)$ equals the Kostka number $K_{\lambda\mu}$.
\end{cor}

Corollary~\ref{cor.componentsA} tells us that some of the irreducible components of parabolic Hessenberg varieties are indexed by certain standard tableaux, specifically, the standard tableaux that become semistandard under the degeneration map.  However, this description does not characterize all irreducible components, as the following example demonstrates.


\begin{example}\label{irreducible}
Let $X\in \mathfrak{gl}_4(\C)$ be a nilpotent matrix of Jordan type $\lambda=(2,1,1)$ so $\dim(\B^X) = 3$.  Let $\mu=(2,2)$ so $J=J_{\mu} = \{ \alpha_1,\alpha_3 \}$ and $w_J=s_1s_3$.  Note that $K_{\lambda \mu}=0$ in this case, meaning $\dim(\B(X,\fp_J))< \dim(\B^X)+\ell(w_J) = 5$ by Corollary~\ref{cor.componentsA}.   Taking $X$ as in Definition~\ref{defn: highest form} we obtain $\Phi_X=\{ \alpha_3 \}$ and  
\[
W(X,J)=\{ e, s_2,  s_1s_2, s_2s_1s_3s_2  \}.
\]
Consider the points $v_1=s_1s_2$ and $v_2=s_2s_1s_3s_2$.  The table below displays the corresponding elements of $\RST(\lambda)$ and $\RSST(\lambda)$, and computes $v_Tw_J$ in each case.
\begin{center}
\begin{tabular}{c | c | c | c | c}
$v\in W(X,J)$ & $T\in \RST(\lambda)$ & $\phi_{\lambda, \mu}(T)\in \RSST(\lambda)$ & $v_T$ & $v_Tw_J$\\ \hline
 & \multirow{3}{*}{$\young(24,1,3)$} & \multirow{3}{*}{$\young(12,1,2)$} & \\
$v_1=s_1s_2$ & & & $s_1s_2$ & $s_1s_2s_1s_3$ \\
& & & & \\
& \multirow{3}{*}{$\young(12,4,3)$} &  \multirow{3}{*}{$\young(11,2,2)$} &  \\
$v_2=s_2s_1s_3s_2$ & & & $s_3s_2$ & $s_3s_2s_1s_3$ \\
&&&&
\end{tabular}
\end{center}
We claim that $\overline{C_{v_1w_J}\cap \B(X,\fp_J)}$ and $\overline{C_{v_2w_J} \cap \B(X,\fp_J)}$ are the irreducible components of $\B(X,\fp_J)$.   We know $\dim(C_{v_1w_J}\cap \B(X,\fp_J))= \ell(v_T)+\ell(w_J)=4$ from our analysis of parabolic Hessenberg varieties.  This is the same as $\dim(C_{v_1w_J})$ so in fact $C_{v_1w_J} \subseteq \B(X,\fp_J)$.  Thus
\[
\overline{C_{v_1w_J}\cap \B(X,\fp_J)} = \overline{C}_{v_1w_J}= \bigsqcup_{w\leq v_1w_J} C_w =   \bigsqcup_{w\leq v_1w_J} C_w\cap\B(X,\fp_J). 
\]
Since $v_2w_J \nleq v_1w_J$ and the Hessenberg Schubert cells corresponding to $v_2w_J$ and $v_1w_J$ have the same dimension, neither of $\overline{C_{v_1w_J}\cap \B(X,\fp_J)}$ and $\overline{C_{v_2w_J} \cap \B(X,\fp_J)}$ can contain the other.  Since $vw_J \leq v_1w_J$ for all other $v\in W(X,J)$, we conclude
\[
	\B(X,\fp_J) = \overline{C}_{v_1w_J} \cup (\overline{C_{v_2w_J} \cap \B(X,\fp_J)}).
\] 
In particular, note that neither irreducible component corresponds to a standard \textup{(}or semistandard\textup{)} tableau of shape $\lambda$.  
\end{example}

Our partial description of the irreducible components of $\B(X,\fp_J)$ leads to the following question.

\begin{question}\label{question}  Suppose $\pi_J(\B^X)$ is paved by Steinberg Schubert cells.  Give a combinatorial description of those $v\in W(X,J)$ for which $\overline{C_v^{P_J}\cap \pi_J(\B^X)}$ is an irreducible component of the Steinberg variety.
\end{question}

Any answer to this question would also compute the irreducible components of the corresponding parabolic Hessenberg variety.  Motivated by Example \ref{irreducible}, one possibility is that $\overline{C_{v}\cap \pi_J(\B^X)}$ is an irreducible component of $\pi_J(\B^X)$ if the Schubert point $v_T$ corresponding to $v$ is a maximal in the set $\{v_T \mid v\in W(X,J)\}$. We have not been able to find a counterexample to this conjecture, but suspect that there is one. 

In addition, an answer to Question~\ref{question} would extend the known characterization of components of the Springer fibers in type $A$.  It appears, too, to require a deep analysis of the set $W(X,J)$ as well as its connection to the geometry of the Steinberg variety.

\newcommand{\etalchar}[1]{$^{#1}$}

\end{document}